\numberwithin{equation}{section}
\newtheorem{theorem}{Theorem}[section]
\newtheorem{corollary}[theorem]{Corollary}
\newtheorem{lemma}[theorem]{Lemma}
\newtheorem{question}[theorem]{Question}
\newtheorem{remark}[theorem]{Remark}
\newtheorem{example}[theorem]{Example}
\newcommand{\frakm}{\mathfrak{m}}
\newcommand{\CC}{\mathbb{C}}
\newcommand{\ZZ}{\mathbb{Z}}
\newcommand{\GG}{\mathbb{G}}
\newcommand{\DD}{\mathcal{D}}
\newcommand{\D}{\mathcal{D}}
\newcommand{\Q}{\mathcal{Q}}
\newcommand{\mc}{\mathcal}
\newcommand{\Schur}{\mathbb{S}}
\DeclareMathOperator{\Soc}{Socle}
\DeclareMathOperator{\Hom}{Hom}
\DeclareMathOperator{\Ext}{Ext}
\DeclareMathOperator{\Tor}{Tor}
\DeclareMathOperator{\Dim}{dim}
\DeclareMathOperator{\GL}{GL}
\DeclareMathOperator{\Sym}{Sym}
\DeclareMathOperator{\Pf}{Pf}
\DeclareMathOperator{\Det}{det}
\DeclareMathOperator{\Dom}{dom}
\DeclareMathOperator{\Grass}{Gr}
\DeclareMathOperator{\Sort}{sort}
\newcommand{\defi}[1]{{\upshape\sffamily #1}}
\newcommand{\jiamin}[1]{{\color{cyan} \sf $\clubsuit\clubsuit\clubsuit$ Jiamin: [#1]}}
\newcommand{\mike}[1]{{\color{red} \sf $\clubsuit\clubsuit\clubsuit$ Mike: [#1]}}
\title[Socle degrees for local cohomology modules]{Socle degrees for local cohomology modules of thickenings of maximal minors and sub-maximal Pfaffians}
\author{Jiamin Li}
\author{Michael Perlman}
\subjclass[2020]{}
\begin{document}

\maketitle

\begin{abstract}
	Let $S$ be the polynomial ring on the space of non-square generic matrices or the space of odd-sized skew-symmetric matrices, and let $I$ be the determinantal ideal of maximal minors or $\Pf$ the ideal of sub-maximal Pfaffians, respectively. Using desingularizations and representation theory of the general linear group we expand upon work of Raicu--Weyman--Witt to determine the $S$-module structures of $\Ext^j_S(S/I^t, S)$ and $\Ext^j_S(S/\Pf^t, S)$, from which we get the degrees of generators of these $\Ext$ modules. As a consequence, via graded local duality we answer a question of Wenliang Zhang on the socle degrees of local cohomology modules of the form $H^j_\frakm(S/I^t)$.
\end{abstract}

\section{Introduction}
Given a polynomial ring $S$ over a field, $I$ a homogeneous ideal, and $\mathfrak{m}$ the homogeneous maximal ideal, local cohomology modules of the form $H^j_\frakm(S/I^t)$ are generally not finitely generated, and in particular may be nonzero in arbitrarily negative internal degrees. In the recent past, asymptotic behavior (as $t$ goes to infinity) of these modules has received much attention (see, for example, \cite{Bhatt+2, DaoMontano, Bhatt+}). Inspired by this past work, Wenliang Zhang \cite{Zhang} posed the following question, where $\Soc(M) = (0 :_M \frakm)$ denotes the socle of an $S$-module $M$.
     
    \begin{question}\label{zhang}
	Let $I$ be homogeneous and $j\geq 0$. Does there exist a constant $c=c(j,I)$ such that $$\Soc(H^j_\frakm(S/I^t))_{d} = 0,$$ 
	for all $t>0$ and $d\leq ct$?
    \end{question}

    Under appropriate assumptions on the ideal $I$ (e.g., local complete intersection, specific cohomological indices), Zhang answers Question \ref{zhang} in the positive (see \cite[Theorem 1.2, 1.3, 1.4]{Zhang}). Question \ref{zhang} remains open in the case of determinantal ideals and ideals of Pfaffians.  
    
In this paper, we study socle degrees for local cohomology modules of powers of ideals of maximal minors of a generic matrix, and ideals of sub-maximal Pfaffians of an odd-sized skew-symmetric matrix. More precisely, we work in the following two settings:
\begin{enumerate}
    \item $S=\Sym(\CC^m\otimes \CC^n)$ is the polynomial ring on the space of $m\times n$ generic matrices ($m>n$), endowed with the action of $\GL_m(\CC)\times \GL_n(\CC)$, and $I$ is the ideal of $n\times n$ minors.

    \item $S=\Sym(\bigwedge^2 \CC^m)$ is the polynomial ring on the space of $m\times m$ skew-symmetric matrices ($m=2n+1$), endowed with the action of $\GL_{m}(\CC)$, and $\Pf$ is the ideal of $2n\times 2n$ Pfaffians,
\end{enumerate}
where for an even-sized skew-symmetric matrix $A$, the Pfaffian is $\sqrt{\operatorname{det}(A)}$ (see \cite[3.4]{BrunsHerzog}). 

Our first result below is concerned with degrees of generators of modules of the form $\Ext^j_S(S/J^t,S)$ for $J=I$ and $J=\Pf$, which are related to the socle degrees in question via graded local duality. To clarify the statement of the theorem, we first explain when these modules are nonzero. By \cite[Theorem 4.3, Theorem 5.3]{RaicuWeymanWitt} we have
\begin{enumerate}
\item $\Ext^j_S(S/I^t,S)\neq 0$ if and only if $j=s(m-n)+1$ for some $1\leq s\leq n$, and $t\geq s$,
\item $\Ext^j_S(S/\Pf^t,S)\neq 0$ if and only if $j=2s+1$ for some $1\leq s\leq n$, and $t\geq 2s-1$.
\end{enumerate}

In particular, in both cases, for a fixed power $t>0$, we have that $\Ext$ is nonzero in at most $n$ cohomological degrees, and for $t\geq n$ (resp. $t\geq 2n-1$) it is nonzero in exactly $n$ degrees.

\begin{theorem}\label{main} Let $1\leq s\leq n$ and $t>0$. The following is true about the nonzero Ext modules:
\begin{enumerate}
    \item Let $I$ be the ideal of maximal minors of a generic matrix. The module $\Ext^{s(m-n)+1}_S(S/I^t, S)$ is generated in a single degree, which is 
    $$
    -s(t+m-s).
    $$
    \item Let $\Pf$ be the ideal of sub-maximal Pfaffians. The module $\Ext^{2s+1}_S(S/\Pf^t, S)$ is generated in a single degree, which is 
    $$
    -s(t+2(n-s+1)).
    $$
\end{enumerate}
\end{theorem}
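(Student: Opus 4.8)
The plan is to compute each of the finitely generated graded $\GL$-equivariant $S$-modules $\Ext^{s(m-n)+1}_S(S/I^t,S)$ and $\Ext^{2s+1}_S(S/\Pf^t,S)$ explicitly, using the Kempf--Weyman geometric technique on the standard desingularizations of the determinantal and Pfaffian varieties, and then to read off the minimal generating degree. In the first setting write $Y=V(I)$ for the variety of $m\times n$ matrices of rank $\le n-1$; it is resolved by $q\colon Z\to Y$, where $Z$ is the total space over the Grassmannian $\Grass(n-1,\CC^m)$ of the bundle $\mathcal R\otimes\CC^n$, with $\mathcal R\subseteq\CC^m\otimes\mathcal O$ the tautological rank-$(n-1)$ subbundle (up to dualizing according to conventions). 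In the second setting $Y=V(\Pf)$ is the variety of skew-symmetric matrices of rank $\le 2n-2$, resolved by the total space over $\Grass(3,\CC^m)$ of the analogous bundle $\bigwedge^2(\CC^m/\mathcal R)$, now with $\mathcal R$ the rank-$3$ tautological subbundle. In both cases $Y$ has rational singularities, so $Rq_*\mathcal O_Z=\mathcal O_Y$, and to reach the thickenings $S/J^t$ one twists $\mathcal O_Z$ by the relevant power of the ideal sheaf of the exceptional locus, using the description of $\Ext^\bullet_S(S/J^t,S)$ via sheaf cohomology on these Grassmannians that goes back, at least implicitly, to \cite{RaicuWeymanWitt}.

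Feeding these twisted sheaves through the geometric technique, and invoking Grothendieck duality, $\Ext^j_S(S/J^t,S)$ --- equivalently, up to a fixed degree shift, the graded Matlis dual of $H^{N-j}_\frakm(S/J^t)$, where $N=\dim S$ --- is computed by a complex whose terms are finite direct sums of cohomology groups of $\GL$-equivariant Schur-functor bundles on $G=\Grass(n-1,\CC^m)$ resp.\ $G=\Grass(3,\CC^m)$, tensored with appropriately shifted free $S$-modules. Bott's theorem evaluates each such cohomology group: it vanishes, or is a single irreducible $\GL_m$-representation concentrated in one cohomological and one internal degree, both determined explicitly by the indexing partition and by the twisting exponent $t$. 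The role of $s$ is that the partitions contributing to the cohomological degree $s(m-n)+1$ (resp.\ $2s+1$) form a single combinatorial family --- the shadow of the $(m-n)$-gaps (resp.\ $2$-gaps) in the Raicu--Weyman--Witt nonvanishing list --- and one checks that the resulting graded $\GL$-module is a single degree shift of a fixed equivariant module whose generators are already understood. Pinning down that shift is the bookkeeping-heavy but essentially routine part: it tracks the $\rho$-shift in Bott's theorem on the relevant Grassmannian together with the contribution $st$ of the $t$-fold twist, and is cross-checked at $s=1$ against the Eagon--Northcott (resp.\ Buchsbaum--Eisenbud) resolution, where $\Ext^{\operatorname{codim}}_S(S/J,S)$ is visibly the cokernel of a map of free modules whose target lives in internal degree $-m$, matching the claimed $-s(t+m-s)$ (resp.\ $-s(t+2(n-s+1))$) at $s=t=1$.

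The main obstacle I anticipate is passing from this cohomological, character-level picture to the genuine graded $S$-module structure, i.e.\ proving that each $\Ext$ module is \emph{generated in a single degree} rather than merely knowing its Hilbert series and its $\GL$-decomposition. Concretely, one must show that multiplication by $S$ --- which acts through the $\GL$-equivariant Pieri maps coming from the Cauchy decomposition $S=\bigoplus_\lambda\Schur_\lambda\CC^m\otimes\Schur_\lambda\CC^n$ --- already surjects onto every graded component starting from the bottom one. I would handle this either by extracting from the geometric technique an explicit $\GL$-equivariant free resolution of $S/J^t$ and identifying $\Ext^j$ as a subquotient in which only the bottom-degree generators survive (as in the $s=1$ cokernel computation above), or, more structurally, by identifying $\Ext^j$ with an explicit twist of a known indecomposable equivariant module for which single-degree generation is already on record, and ruling out extra generators via a vanishing statement for higher cohomology on the Grassmannian. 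A secondary point is uniformity: the Bott computations should be arranged so that the argument runs simultaneously for all admissible $t$ and all $1\le s\le n$, with the dependence on $t$ visibly linear and the dependence on $s$ passing through a single indexing family.
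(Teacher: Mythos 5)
There is a genuine gap, and it is exactly the one you flag at the end: everything your geometric-technique/Bott computation can deliver is the $\GL$-equivariant character and grading of $\Ext^j_S(S/J^t,S)$, which is already in Raicu--Weyman--Witt, and this information does not determine the degrees of minimal generators. Neither of your proposed fixes closes this. First, the complex produced by the Kempf--Weyman construction (with twists by powers of the exceptional ideal) computes $\Ext^j$ only through its terms; for $t\ge 2$ it is not a minimal resolution, its differentials are not accessible by Bott bookkeeping, and $\Ext^j/\frakm\,\Ext^j$ cannot be read off from Hilbert series or characters alone, so ``only the bottom-degree generators survive'' is precisely the assertion that needs proof. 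Second, there is no indecomposable equivariant module ``already on record'' with known single-degree generation to compare with: the facts on record (de Concini--Eisenbud--Procesi for generic matrices, Abeasis--Del Fra for Pfaffians) describe when one isotypic component generates another \emph{inside the polynomial ring $S$ itself}, whereas the Ext modules here sit inside the simple $\mathcal D$-modules $D_p$ (resp.\ $B_p$), and transferring the generation statement from $S$ to these modules is the actual mathematical content of the paper (its Theorems \ref{S_structure_Dp} and \ref{S_structure_Bp}).

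For contrast, the paper does not recompute the Ext modules geometrically at all. It uses the injections $\Ext^j_S(S/J^t,S)\hookrightarrow H^j_J(S)=D_p$ (resp.\ $B_p$) together with the RWW decomposition, and then proves that in $D_p$ (resp.\ $B_p$) the isotypic component for $\lambda$ generates the one for $\mu$ iff $\lambda\le\mu$: first in the square case by twisting the de Concini--Eisenbud--Procesi (resp.\ Abeasis--Del Fra) statement by a power of $\det$ (resp.\ $\Pf$) inside the localization, and then in the non-square case by realizing $D_p$ as a pushforward of a relative module over a Grassmannian bundle and using Bott-type vanishing (Lemmas \ref{Bottlemma}, \ref{Bottlemma2}) to show the relevant multiplication maps stay surjective after taking $\mathbb{R}\pi_\ast$. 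Since the Ext module is a multiplicity-free equivariant submodule of $D_p$ with a unique minimal weight in the RWW list, single-degree generation and the degree formula follow. If you want to salvage your plan, the Bott-vanishing idea you mention in passing would have to be developed into exactly this kind of statement about surjectivity of Pieri multiplication maps inside $D_p$ and $B_p$; without it, your argument establishes the degree formula only conditionally on single-degree generation, which it does not prove.
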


The structure of the above modules as representations of $\GL_m(\CC)\times \GL_n(\CC)$ (resp. $\GL_{m}(\CC)$) was calculated in \cite{RaicuWeymanWitt} and is recalled below. Theorem \ref{main} is a consequence of our main results Theorem \ref{degrees_gens_ext_generic} and Theorem \ref{degrees_gens_ext_skew}, which explicitly describe the structure of these representations as $S$-modules. Theorem \ref{main} follows from the fact that $\Ext^{s(m-n)+1}_S(S/I^t, S)$ and $\Ext^{2s+1}_S(S/\Pf^t, S)$ are both generated by a single irreducible representation. 

As a corollary, we provide a complete characterization of cyclicity of these $\Ext$ modules.   
\begin{corollary}\label{main_corollary}
With the notation above, the following is true about the nonzero Ext modules:
\begin{enumerate}
    \item The module $\Ext^{s(m-n)+1}_S(S/I^t, S)$ is cyclic if and only if $s=t=n$.
    
    \item The module $\Ext^{2s+1}_S(S/\Pf^t, S)$ is cyclic if and only if $t=2s-1$.
\end{enumerate}
\end{corollary}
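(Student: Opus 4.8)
The plan is to reduce the statement to the one-dimensionality of a single irreducible representation. Fix a nonzero module, say $M = \Ext^{s(m-n)+1}_S(S/I^t, S)$ with $G = \GL_m(\CC)\times\GL_n(\CC)$ the acting group (the argument for $\Pf$ is entirely analogous), and let $d = -s(t+m-s)$ be the degree in which $M$ is generated, as in Theorem \ref{main}. Since $S$ is non-negatively graded with $S_0 = \CC$ and $M$ is finitely generated and generated in degree $d$, one has $M_e = 0$ for $e < d$, hence $(\frakm M)_d = 0$, so $M/\frakm M$ is concentrated in degree $d$ and equals $M_d$. By the fact underlying Theorem \ref{main} — recalled from \cite{RaicuWeymanWitt} and made precise in Theorem \ref{degrees_gens_ext_generic} (resp. Theorem \ref{degrees_gens_ext_skew}) — the space $M_d$ is a single irreducible $G$-representation $V$. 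By graded Nakayama, $M$ is cyclic over $S$ if and only if $\dim_\CC M/\frakm M = 1$, that is, if and only if $\dim_\CC V = 1$.

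It then remains to detect when $V$ is one-dimensional. Here I would invoke the elementary fact that a Schur functor $\Schur_\lambda\CC^k$ attached to a dominant weight $\lambda\in\ZZ^k$ (negative parts allowed) has $\dim_\CC\Schur_\lambda\CC^k = 1$ if and only if $\lambda$ is constant, $\lambda_1 = \cdots = \lambda_k$, in which case $\Schur_\lambda\CC^k = (\det\CC^k)^{\lambda_1}$; equivalently, the one-dimensional rational representations of $\GL_k(\CC)$ are exactly the integer powers of the determinant. Writing $V = \Schur_\alpha\CC^m\otimes\Schur_\beta\CC^n$ in the maximal minors case and $V = \Schur_\gamma\CC^m$ in the Pfaffian case, with $\alpha,\gamma\in\ZZ^m$ and $\beta\in\ZZ^n$ the explicit dominant weights recorded in Theorems \ref{degrees_gens_ext_generic} and \ref{degrees_gens_ext_skew}, the corollary reduces to determining for which $(s,t)$ the weights $\alpha$ and $\beta$ are simultaneously constant (resp.\ $\gamma$ is constant).

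I would then read these weights off and impose the constancy conditions directly. For maximal minors the arithmetic should show that $\alpha$ and $\beta$ are simultaneously constant only at the corner $s = t = n$ of the admissible region $\{1\le s\le n,\ t\ge s\}$ — where $V = (\det\CC^m)^{-n}\otimes(\det\CC^n)^{-m}$ and $d = -mn$ — while for every other admissible $(s,t)$ at least one of $\alpha,\beta$ is non-constant, its two distinct parts being governed respectively by the cohomological index $s$ and by the power $t$, so that $\dim_\CC V > 1$. For sub-maximal Pfaffians, $\gamma$ should turn out to be constant, equal to $(-2s)^m$ so that $V = (\det\CC^m)^{-2s}$ and $d = -sm$, precisely when $t = 2s - 1$ — i.e.\ exactly along the boundary of the admissible region $\{1\le s\le n,\ t\ge 2s-1\}$. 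Combining with Theorem \ref{main} gives both characterizations.

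The step I expect to be the main obstacle is this last one: correctly extracting the highest weight of the generating representation from the structural theorems and confirming that there is no sporadic collapse to a one-dimensional representation for $(s,t)$ outside the stated families. Nothing here is deep, but it requires writing out the partitions appearing in Theorems \ref{degrees_gens_ext_generic} and \ref{degrees_gens_ext_skew} explicitly and checking the constancy condition on each distinct part — in particular verifying that for interior parameter values the extra block forced by the cohomological index strictly enlarges the generating weight beyond a full-height rectangle.
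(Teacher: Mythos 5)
Your proposal is correct and follows essentially the same route as the paper: both reduce cyclicity to the one-dimensionality of the single irreducible representation generating the module (the paper via Lemma \ref{onedimlemma} inside the proofs of Theorems \ref{degrees_gens_ext_generic} and \ref{degrees_gens_ext_skew}, you via graded Nakayama plus the fact that $\Schur_\lambda$ is one-dimensional iff $\lambda$ is constant), and then impose constancy on the explicit generating weights. Your arithmetic predictions, including $V=(\det\CC^m)^{-n}\otimes(\det\CC^n)^{-m}$ in degree $-mn$ at $s=t=n$ and $V=(\det\CC^{2n+1})^{-2s}$ in degree $-sm$ at $t=2s-1$, agree with the paper.
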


In particular, $\Ext^{s(m-n)+1}_S(S/I^t, S)$ is cyclic for a unique choice of $s$ and $t$, whereas in the skew-symmetric case we have that for all $1\leq s \leq n$ there exists a unique $t$ for which $\Ext^{2s+1}_S(S/\Pf^t, S)$ is cyclic. The reason for the different behavior in the two cases may be explained using the underlying representation theory. The module $\Ext^{s(m-n)+1}_S(S/I^t, S)$ is cyclic if and only if it is generated by a one-dimensional (and thus $\operatorname{SL}_m(\CC)\times \operatorname{SL}_n(\CC)$-invariant) sub-representation. However, $\Ext^{s(m-n)+1}_S(S/I^t, S)$ has no $\operatorname{SL}_m(\CC)\times \operatorname{SL}_n(\CC)$-invariants unless $s=n$. On the other hand, for all $s$ and $t\geq 2s-1$ we have that $\Ext^{2s+1}_S(S/\Pf^t, S)$ contains a (unique) $\operatorname{SL}_{2n+1}(\CC)$-invariant subspace.

When $s=n$ the Ext modules are finite-dimensional vector spaces (in both cases). We exhibit one such example in the case of generic matrices. We carry out infinite-dimensional examples in Sections \ref{cal_deg_socle_generic} and \ref{cal_deg_socle_skew}.

\begin{example}\label{Ex}
Let $m=4$, $n=2$, and $s=n$, in which case we are considering cohomological degree $5$. By \cite[Theorem 4.3]{RaicuWeymanWitt}, we have that
$$
\Ext^5_S(S/I^t, S)\neq 0\quad \iff \quad t\geq 2,
$$
and there is an infinite chain of inclusions
$$
\Ext^5_S(S/I^2, S)\subseteq \Ext^5_S(S/I^3, S)\subseteq \Ext^5_S(S/I^4, S)\subseteq  \cdots,
$$
and the module $\Ext^5_S(S/I^4, S)$ has the following decomposition into irreducible representations:
\begin{align*}
\Ext^5_S(S/I^4, S) & = (\Schur_{(-2^2, -4^2)}\CC^4 \otimes \Schur_{(-6^2)}\CC^2) \oplus (\Schur_{(-2^2,-3,-4)}\CC^4 \otimes \Schur_{(-5, -6)}\CC^2) \\
&\oplus (\Schur_{(-2^2, -3^2)}\CC^4 \otimes \Schur_{(-5^2)}\CC^2) \oplus (\Schur_{(-2^3, -3)}\CC^4 \otimes \Schur_{(-4,-5)}\CC^2)\\
&\oplus (\Schur_{(-2^4)}\CC^4 \otimes \Schur_{(-4^2)}\CC^2),
\end{align*}
where $\Schur_\lambda(-)$ is a Schur functor (see Section 2.1).

It follows from Theorem \ref{degrees_gens_ext_generic} below that $\Schur_{(-2^2, -4^2)}\CC^4 \otimes \Schur_{(-6^2)}\CC^2$ is the space of generators of $\Ext^5_S(S/I^4, S)$, with degree $-12$. The second and the third lines above are the decomposition of $\Ext^5_S(S/I^3,S)$, which is generated by $\Schur_{(-2^2,-3^2)}\CC^4 \otimes \Schur_{(-5^2)}\CC^2$. The third line is the decomposition of $\Ext^5_S(S/I^2, S)$, which is cyclic with generator  $\Schur_{(-2^4)}\CC^4 \otimes \Schur_{(-4^2)}\CC^2$.
\end{example}

After applying graded local duality (see Section \ref{localDualSection}), we obtain the following positive answer to Question \ref{zhang}. In fact, we exhibit the socle degree as an explicit linear function in $t$. 
\begin{corollary}\label{CorSocle}
Let $1\leq s\leq n$ and $t>0$. Using notation as above, we have the following:
\begin{enumerate}
\item The  degree of minimal generators of $\Soc(H^{mn-(m-n)s-1}_\frakm(S/I^t))$ is 
$$
s(t+m-s)-mn.
$$

\item The degree of minimal generators of $\Soc(H^{2n^2+n - 2s-1}_\frakm(S/\Pf^t))$ is
$$
s(t+2(n-s+1)) - (2n^2+n).
$$
\end{enumerate}
\end{corollary}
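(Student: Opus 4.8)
The plan is to deduce both parts of Corollary \ref{CorSocle} from Theorem \ref{main} by a single application of graded local duality, recorded in Section \ref{localDualSection}. Write $S$ for the relevant polynomial ring and set $N = \dim S$, so that $N = mn$ in the generic case and $N = \binom{2n+1}{2} = 2n^2 + n$ in the skew-symmetric case. Recall that for a finitely generated graded $S$-module $M$ and any $i \geq 0$, graded local duality furnishes a degree-preserving isomorphism
$$
H^i_\frakm(M)^{\vee} \;\cong\; \Ext^{\,N-i}_S(M,S)(-N),
$$
where $(-)^{\vee}$ denotes the graded Matlis dual, normalized by $(M^{\vee})_d = \Hom_k(M_{-d},k)$. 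Since $S/I^t$ and $S/\Pf^t$ are finitely generated, this applies even though the local cohomology modules themselves are not finitely generated.

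First I would line up the cohomological indices. Taking $M = S/I^t$ and $i = mn - (m-n)s - 1$ gives $N - i = s(m-n)+1$, and taking $M = S/\Pf^t$ and $i = 2n^2 + n - 2s - 1$ gives $N - i = 2s+1$; in each case the right-hand side of the displayed isomorphism is precisely the $\Ext$ module treated in Theorem \ref{main}, regraded by $-N$.

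Next I would transport the socle across the duality. Because Matlis duality is exact and restricts to a perfect duality between Noetherian and Artinian graded modules, one has
$$
\Soc\big(H^i_\frakm(M)\big) \;=\; \Hom_S\big(k, H^i_\frakm(M)\big) \;\cong\; \Big(\Ext^{\,N-i}_S(M,S)(-N) \otimes_S k\Big)^{\vee},
$$
so the socle of $H^i_\frakm(M)$ is the Matlis dual of the space of minimal generators of $\Ext^{\,N-i}_S(M,S)(-N)$. By Theorem \ref{main}, $\Ext^{\,N-i}_S(M,S)$ is generated in the single degree $-s(t+m-s)$ (resp. $-s(t+2(n-s+1))$); after shifting by $-N$ its minimal generators lie in degree $-s(t+m-s)+N$ (resp. $-s(t+2(n-s+1))+N$), and Matlis dualizing negates degrees. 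Hence $\Soc(H^i_\frakm(M))$ is concentrated in the single degree $s(t+m-s) - mn$ in the generic case and $s(t+2(n-s+1)) - (2n^2+n)$ in the skew-symmetric case, which is the assertion.

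There is no real obstacle in this corollary: all of the substance lives in Theorem \ref{main}, equivalently in Theorems \ref{degrees_gens_ext_generic} and \ref{degrees_gens_ext_skew}. The only points requiring care are bookkeeping ones — getting the grading shift by $N = \dim S$ and the sign convention in the Matlis dual correct, and keeping the two index conventions ($j = s(m-n)+1$ versus $i = N-j$) straight. I would additionally remark that, precisely because the relevant $\Ext$ modules are generated in a single degree, each socle is concentrated in a single degree, which is what makes the answer to Question \ref{zhang} a clean linear function of $t$ with leading coefficient $s$.
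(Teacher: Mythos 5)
Your proof is correct and follows essentially the same route as the paper: the paper's Lemma \ref{minimalsocle} is exactly the combination of graded local duality with the identification $\Ext^j_S(M,N)^\vee \cong \Tor^S_j(M,N^\vee)$ that you invoke, and the corollary is then read off from Theorem \ref{degrees_gens_ext_generic} and Theorem \ref{degrees_gens_ext_skew} (equivalently Theorem \ref{main}), with the same index matching $j=s(m-n)+1$, resp. $j=2s+1$, and the same shift by $N=\dim S$. Your bookkeeping of the twist by $(-N)$ and the sign reversal under Matlis duality reproduces the stated degrees, so there is nothing to add.
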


Following \cite{RaicuWeymanWitt}, several authors have investigated finiteness properties of the local cohomology modules $H^j_\frakm(S/I^t)$ and $H^j_\frakm(S/\Pf^t)$ \cite{Kenkel, Li, Simper}. In particular, in \cite{Li}, it is established that, for $J=I$ or $J=\Pf$, for all $t>0$, none of the modules $H^{j}_\frakm(S/J^t)$ have finite length except for $H^{n^2-1}_\frakm(S/J^t)$ when $J=I$ and $H^{2n^2-n-1}_\frakm(S/J^t)$ when $J=\Pf$ (the cases $s=n$ above). Moreover, the asymptotic behaviors of the length of $H^{n^2-1}_\frakm(S/I^t)$ and $H^{2n^2-n-1}_\frakm(S/\Pf^t)$ are studied in detail (see \cite[Theorem 1.1, Theorem 1.2]{Li}). See also \cite{Raicu18} (resp. \cite{Perlman}) for Kodaira-like vanishing result on the graded pieces of $H^{j}_\frakm(S/I^t)$ (resp. $H^{j}_\frakm(S/\Pf^t)$). More recently, Simper \cite{Simper} focuses on the case $s=n$ for generic matrices, and determines the annihilators of the modules $\Ext^{(m-n)n+1}_S(S/I^t, S)$, and goes on to show that $H^{n^2-1}_\frakm(S/I^t)$ is cyclic. In the case of $m\times (m-1)$-matrices he explicitly realizes the natural inclusions of the modules $\Ext^{m}_S(S/I^t, S)$ into $H^{m}_I(S)$. Our results complement the previous study on this topic mentioned above, in that we answer Question \ref{zhang} for all cohomological degrees, and for ideals of sub-maximal Pfaffians.

All of the above results are consequences of the following main result. Let $J=I$ or $J=\Pf$. For all $j\geq 0$, the $\GL$ structure of $H^{j}_J(S)$ is determined by a subset of the set of dominant weights (see Sections \ref{sectionDgen} and \ref{DmodSkew}), and for a dominant weight $\lambda$, we write $H^{j}_J(S)_{\lambda}$ for the corresponding isotypic component. We order the set of dominant weights by saying $\lambda\leq \mu$ if $\lambda_i\leq \mu_i$ for all $i=1,\cdots, n$. 

\begin{theorem}[Theorem \ref{S_structure_Dp}, Theorem \ref{S_structure_Bp}]
Let $j\geq 0$, let $\lambda, \mu$ be dominant weights, and let $J=I$ or $J=\Pf$. We have that $H^{j}_J(S)_{\lambda}$ generates $H^{j}_J(S)_{\mu}$ over $S$ if and only if $\lambda\leq \mu$.
\end{theorem}

We summarize the proof of Theorem \ref{main} and describe the organization of the article. 

\begin{enumerate}
    \item For $j\geq 0$, the direct limit over $t$ of the modules $\Ext^{j}_S(S/J^t, S)$ is the local cohomology module $H^j_J(S)$. When $J$ is the ideal of maximal minors or sub-maximal Pfaffians, this limit is a union, i.e. for all $t>0$ and $j\geq 0$ we have an inclusion:
    $$
    \Ext^{j}_S(S/J^t, S)\subseteq H^j_J(S).
    $$
   
    \item Each nonzero local cohomology module $H^j_J(S)$ is a simple $\mathcal{D}$-module, where $\mathcal{D}$ is the Weyl algebra of differential operators associated to $S$. In Sections \ref{genericModuleSection} and \ref{Bgens}, we use desingularizations and Bott's Theorem for Grassmannians to determine the $S$-module structure of these simple $\mathcal{D}$-modules in terms of representation theory of the general linear group. In Section \ref{prelim}, we review the necessary background on representation theory and Bott's Theorem, and in Sections \ref{sectionDgen} and \ref{DmodSkew}, we recall basics of the relevant simple $\mathcal{D}$-modules.
     \item  In Sections \ref{cal_deg_socle_generic} and \ref{cal_deg_socle_skew}, we use (1) and (2) to deduce the $S$-module structure of $\Ext^{j}_S(S/J^t, S)$ from that of $H^j_J(S)$. Via graded local duality (recalled in Section \ref{prelim}), we obtain Corollary \ref{CorSocle} on the socle degrees. 
\end{enumerate}

\section{Preliminaries}\label{prelim}

In this section, we recall the necessary background on representation theory, cohomology of vector bundles on Grassmannians, relative spaces of matrices, and graded local duality.

\subsection{Representation Theory}\label{prelimRep}
Given a positive integer $N$, and an $N$-dimensional complex vector space $V$, we write $\GL(V)$ for the general linear group of linear automorphisms of $V$. The irreducible representations of $\GL(V)$ are indexed by \defi{dominant weights} $\lambda=(\lambda_1\geq \lambda_2\geq \cdots \geq \lambda_N)\in \mathbb{Z}^N$. We write $\mathbb{Z}^N_{\textnormal{dom}}$ for the set of dominant weights, and we write $\mathbb{S}_{\lambda}V$ for the corresponding irreducible representation, where $\mathbb{S}_{\lambda}(-)$ is a \defi{Schur functor} \cite[Chapter 6]{FultonHarris}. We abbreviate $d$ repeated entries $(a,\cdots, a)$ to $(a^d)$. For example, we have $\mathbb{S}_{(d)}V=\operatorname{Sym}^d(V)$, and for $\lambda=(1,\cdots,1)=(1^d)$ we have $\mathbb{S}_{\lambda}V=\bigwedge^dV$. The dimension of $\mathbb{S}_{\lambda}V$ is given by the \defi{hook-content formula} \cite[Theorem 6.3]{FultonHarris} 
\begin{equation}\label{dim_schur}
\Dim_\CC(\Schur_\lambda V) = \prod_{N\geq j>i \geq 1} \dfrac{\lambda_i - \lambda_j + j - i}{j-i}.    
\end{equation}
For instance, $\Schur_\lambda V$ is one-dimensional if and only if $\lambda_1=\lambda_2=\cdots =\lambda_N$.

We endow the dominant weights with the following partial order. Given $\lambda, \mu \in \mathbb{Z}^N_{\textnormal{dom}}$ we have
$$
\lambda \geq \mu \quad \iff \quad \lambda_i\geq \mu_i\quad\textnormal{for $i=1,\cdots,N$.}
$$

\subsection{Grassmannians and Bott's Theorem}

Given an $N$-dimensional vector space $V$, we write $\GG=\Grass(k,V)$ for the Grassmannian of $k$-dimensional quotients of $V$. We let $\mc{Q}$ denote the tautological quotient sheaf of rank $k$, and let $\mc{R}$ denote the tautological subsheaf of rank $N-k$, which fit into the short exact sequence
$$
0\longrightarrow \mathcal{R} \longrightarrow V\otimes \mathcal{O}_{\GG}\longrightarrow \mathcal{Q}\longrightarrow 0.
$$
We refer to the map $V\otimes \mathcal{O}_{\GG}\to \mathcal{Q}$ as the tautological surjection.

In order to calculate cohomology of locally free sheaves on $\GG$, we use Bott's theorem, which we now recall.

\begin{theorem}\label{Bott_thm}\textnormal{(see \cite[Corollary 4.1.9]{Weyman})}
Using notation as above, let $\lambda \in \mathbb{Z}^k_{\textnormal{dom}}$ and $\mu\in \mathbb{Z}^{N-k}_{\textnormal{dom}}$, and let
	$$
	\gamma=(\lambda_1,\cdots, \lambda_k,\mu_1,\cdots,\mu_{N-k}).
	$$
	Let $\rho = (N-1, N-2, \cdots, 1, 0)$, and denote by $\Sort(\gamma + \rho)$ the element of $\mathbb{Z}^N$ obtained by arranging the entries of $\gamma+\rho$ in weakly decreasing order. Let $\tilde{\gamma} = \Sort(\gamma + \rho) - \rho$. Let $\sigma$ be the minimal number of simple transpositions that permutes $\gamma+\rho$ into $\Sort(\gamma+\rho)$. Then we have
\[
		H^j(\GG, \Schur_\lambda \mc{Q} \otimes \Schur_\mu \mc{R}) = 
	\begin{cases}
		\Schur_{\tilde{\gamma}} V & \text{if }\gamma+\rho \text{ has no repeated entries and } j=\sigma, \\
		0 & \text{otherwise.}
	\end{cases}
	\]
\end{theorem}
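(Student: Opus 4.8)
The statement is the Borel--Weil--Bott theorem specialized to the general linear group, so the plan is to (i) reduce the sheaf-theoretic assertion about Schur bundles on $\GG$ to a statement about a single line bundle on the full flag variety, and then (ii) carry out the type-$A$ combinatorics that turn the Lie-theoretic ``dot action'' into the explicit recipe $\widetilde\gamma=\Sort(\gamma+\rho)-\rho$ with cohomological degree $\sigma$. I would present (ii) geometrically, via a tower of $\mathbb{P}^1$-bundles, so that the whole argument is self-contained.

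For (i): the Grassmannian $\GG=\Grass(k,V)$ is $\GL(V)/P$ with $P$ the maximal parabolic stabilizing the tautological subspace, and $\Schur_\lambda\mc{Q}\otimes\Schur_\mu\mc{R}$ is the homogeneous bundle induced from the irreducible representation of the Levi $\GL_k\times\GL_{N-k}$ with highest weight $\gamma=(\lambda_1,\dots,\lambda_k,\mu_1,\dots,\mu_{N-k})$ (one must fix conventions so that $\mc{Q}$, being globally generated with $H^0=V$, contributes the first $k$ coordinates and $\mc{R}$ the last $N-k$, compatibly with $0\to\mc{R}\to V\otimes\mathcal{O}_{\GG}\to\mc{Q}\to 0$). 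Introduce the full flag variety $\mathrm{Fl}(V)$ and the projection $\pi\colon\mathrm{Fl}(V)\to\GG$, whose fiber over a quotient $V\twoheadrightarrow Q$ with kernel $R$ is $\mathrm{Fl}(R)\times\mathrm{Fl}(Q)$. Since $\lambda$ is dominant for $\GL_k$ and $\mu$ for $\GL_{N-k}$, fiberwise Borel--Weil gives $R\pi_*\,\mathcal{L}_\gamma=\Schur_\lambda\mc{Q}\otimes\Schur_\mu\mc{R}$, concentrated in cohomological degree $0$, where $\mathcal{L}_\gamma$ is the line bundle on $\mathrm{Fl}(V)$ of weight $\gamma$. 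By the Leray spectral sequence this reduces the theorem to computing $H^\bullet(\mathrm{Fl}(V),\mathcal{L}_\gamma)$.

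For (ii): factor $\mathrm{Fl}(V)\to\mathrm{Spec}\,\CC$ as a tower of $\mathbb{P}^1$-bundles, one for each simple transposition $s_i$. At a stage with current weight $\delta$, the projection formula together with the cohomology of $\mathcal{O}(a)$ on $\mathbb{P}^1$ yields three cases determined by the $i$-th and $(i{+}1)$-th entries of $\delta+\rho$, where $\rho=(N-1,\dots,1,0)$: if they are strictly decreasing nothing changes ($R^1=0$); if they are equal, all higher direct images vanish and hence $H^\bullet(\mathrm{Fl}(V),\mathcal{L}_\gamma)=0$; if they are increasing, $\delta$ is replaced by $s_i\cdot\delta:=s_i(\delta+\rho)-\rho$ and the cohomological degree rises by $1$. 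Running this ``bubble sort'' on $\gamma+\rho$: if it ever produces a repeated entry the cohomology vanishes; otherwise after exactly $\sigma$ adjacent swaps it reaches $\Sort(\gamma+\rho)$, giving $H^\sigma(\mathrm{Fl}(V),\mathcal{L}_\gamma)=\Schur_{\widetilde\gamma}V$ with $\widetilde\gamma=\Sort(\gamma+\rho)-\rho$ and all other cohomology zero. Pulling back through $\pi$ yields the theorem; that $\sigma$ is independent of the chosen sequence of transpositions follows from its identification with the number of inversions of the unique permutation sorting $\gamma+\rho$ (equivalently, it is forced by the one-degree concentration of the cohomology).

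The main obstacle is the bookkeeping in the last step: one must verify that the cohomology remains concentrated in a single degree throughout the tower (so the iterated pushforward is genuinely a one-term object at each stage), that the vanishing case is detected consistently regardless of which adjacent pair one compares first, and—already at the start—that the Schur-functor/Borel--Weil dictionary is set up with the correct duality and ordering conventions so that $\gamma=(\lambda,\mu)$ really is the weight of $\mathcal{L}_\gamma$ and $\widetilde\gamma$ indexes $\Schur_{\widetilde\gamma}V$ rather than its dual. Once these conventions are pinned down, the remainder is a standard, if careful, application of the geometric technique.
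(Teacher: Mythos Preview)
The paper does not prove this theorem at all: it is stated with a reference to \cite[Corollary~4.1.9]{Weyman} and then immediately used as a black box (the very next line reads ``We store the following consequences of Theorem~\ref{Bott_thm} for use in our argument''). So there is no ``paper's own proof'' to compare against.

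That said, your outline is a correct and standard route to Borel--Weil--Bott in type~$A$, essentially Demazure's proof. Two small points of care. First, in step~(ii) you describe factoring $\mathrm{Fl}(V)\to\mathrm{Spec}\,\CC$ as a tower of $\mathbb{P}^1$-bundles ``one for each simple transposition $s_i$''; more precisely, the relevant $\mathbb{P}^1$-fibrations are the maps $\mathrm{Fl}(V)\to G/P_i$ to the minimal parabolics, and Demazure's argument iterates push--pull along these (using a reduced word for the element needed to make the weight dominant) rather than literally composing them into a single tower down to a point. The mechanics you describe (the three cases according to whether the $i$th and $(i{+}1)$th entries of $\delta+\rho$ are decreasing, equal, or increasing) are exactly right, and the identification of $\sigma$ with the inversion count is the correct way to see independence of the chosen reduced word. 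Second, your caveat about conventions is well placed: the most common source of sign or duality errors in this computation is precisely the identification of $\Schur_\lambda\mc{Q}\otimes\Schur_\mu\mc{R}$ with the bundle induced from the weight $\gamma=(\lambda,\mu)$ rather than its dual, and Weyman's book fixes one consistent choice that matches the statement here.
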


We store the following consequences of Theorem \ref{Bott_thm} for use in our argument.

\begin{lemma}\label{Bottlemma}
Let $\lambda \in \mathbb{Z}^k_{\textnormal{dom}}$. The following is true about the cohomology of $\mathbb{S}_{\lambda}\mc{Q}$.
\begin{enumerate}
    \item If $\lambda_p\geq p-k$ and $\lambda_{p+1}\leq p-N$ for some $0\leq p\leq k$, then
    $$
    H^{(N-k)(k-p)}(\mathbb{G},\mathbb{S}_{\lambda}\mc{Q})=\mathbb{S}_{\lambda(p)}V,
    $$
    where 
    $$
    \lambda(p)=(\lambda_1,\cdots, \lambda_p, (p-k)^{N-k},\lambda_{p+1}+(N-k),\cdots, \lambda_k+(N-k)).
    $$
    \item Otherwise, we have 
$$
H^{j}(\mathbb{G},\mathbb{S}_{\lambda}\mc{Q})=0\quad \textnormal{for all $j$}.
$$
\end{enumerate}

\end{lemma}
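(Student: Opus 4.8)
The plan is to apply Bott's theorem (Theorem~\ref{Bott_thm}) with the trivial weight $\mu = (0^{N-k}) \in \mathbb{Z}^{N-k}_{\textnormal{dom}}$, so that $\Schur_\mu \mc{R} = \mathcal{O}_{\GG}$ and we are computing the cohomology of $\Schur_\lambda \mc{Q}$ itself. Setting $\gamma = (\lambda_1, \dots, \lambda_k, 0, \dots, 0)$ with $N-k$ trailing zeros, we have
\[
\gamma + \rho = (\lambda_1 + N-1,\ \dots,\ \lambda_k + N - k,\ N-k-1,\ N-k-2,\ \dots,\ 1,\ 0).
\]
The structural observation driving the argument is that $\gamma+\rho$ is the concatenation of two strictly decreasing runs: the first $k$ entries $(\lambda_i + N - i)_{i=1}^k$ are strictly decreasing because $\lambda$ is dominant, and the final $N-k$ entries $(N-k-1,\dots,0)$ are visibly strictly decreasing. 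Hence a repetition in $\gamma+\rho$ can occur only \emph{across} the two runs, that is, precisely when $\lambda_i + N - i \in \{0,1,\dots,N-k-1\}$, equivalently $i - N \le \lambda_i \le i - k - 1$, for some $i$.

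Next I would convert this into a condition on a single index, using that $i \mapsto \lambda_i - i$ is strictly decreasing (again by dominance). The set $\{\,i : \lambda_i - i \ge -k\,\}$ is then an initial segment $\{1,\dots,p\}$ of $\{1,\dots,k\}$ (possibly empty, in which case $p = 0$), and $\gamma + \rho$ has no repeated entry precisely when, in addition, $\lambda_i - i \le -N-1$ for every $i > p$; by monotonicity this last condition is equivalent to $\lambda_{p+1} - (p+1) \le -N-1$. In other words, $\gamma+\rho$ has no repeats if and only if there exists $p \in \{0,\dots,k\}$ with $\lambda_p \ge p-k$ and $\lambda_{p+1} \le p-N$ (reading $\lambda_0 = +\infty$ and $\lambda_{k+1} = -\infty$), and a short argument using $k \le N$ shows such a $p$ is then unique. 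When no such $p$ exists, $\gamma+\rho$ has a repeated entry and Bott's theorem yields conclusion (2); it remains to analyze the case that $p$ exists.

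Finally, assuming $p$ exists, I would compute $\Sort(\gamma+\rho)$ and its length $\sigma$ explicitly. From $\lambda_p \ge p-k$ the first $p$ entries of $\gamma+\rho$ are all $\ge N-k$; from $\lambda_{p+1} \le p-N$ the next $k-p$ entries are all $\le -1$; and the final $N-k$ entries lie in $\{0,\dots,N-k-1\}$. Sorting into decreasing order therefore just slides the tail block $(N-k-1,\dots,0)$ into positions $p+1,\dots,p+(N-k)$, leaving the other two blocks in place, and subtracting $\rho$ from the result returns exactly
\[
\lambda(p) = (\lambda_1,\dots,\lambda_p,\ (p-k)^{N-k},\ \lambda_{p+1}+(N-k),\dots,\lambda_k+(N-k)),
\]
which is automatically dominant since $\Sort(\gamma+\rho)$ has distinct entries. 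The only inversions introduced are the pairs formed by one of the $N-k$ nonnegative tail entries with one of the $k-p$ negative middle entries, every other pair already being in the correct relative order; hence $\sigma = (N-k)(k-p)$. Substituting into Theorem~\ref{Bott_thm} gives conclusion (1). The genuine work is concentrated in the middle step — extracting the threshold index $p$ from the monotonicity of $\lambda_i - i$ and verifying its uniqueness — together with the inversion count; both are elementary once the decomposition of $\gamma+\rho$ into two decreasing runs is in hand.
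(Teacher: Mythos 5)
Your proposal is correct, and it is complete where it matters: the decomposition of $\gamma+\rho$ into two strictly decreasing runs, the identification of the threshold index $p$ via the strict monotonicity of $i\mapsto\lambda_i-i$, the block structure $\bigl(\text{entries}\geq N-k\bigr)$, $\bigl(\text{entries}\leq -1\bigr)$, $\bigl(N-k-1,\dots,0\bigr)$, the verification that $\Sort(\gamma+\rho)-\rho=\lambda(p)$, and the inversion count $\sigma=(N-k)(k-p)$ all check out. The only difference from the paper is that the paper offers no argument at all for this lemma --- it simply cites \cite[Theorem 2.3(b)]{RaicuWeyman} --- so your write-up is a self-contained substitute for that citation; it is the standard application of Bott's theorem (Theorem \ref{Bott_thm}) and runs exactly parallel to the computation the paper does spell out for the companion vanishing statement, Lemma \ref{Bottlemma2}. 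If you write it up in full, the two points worth displaying carefully are the equivalence ``$\gamma+\rho$ has no repeated entries $\iff$ such a $p$ exists'' (your reduction to the single inequality $\lambda_{p+1}\leq p-N$ via monotonicity is the right way to phrase it) and the fact that the minimal number of simple transpositions equals the number of inversions, which here are precisely the $(N-k)(k-p)$ cross-block pairs.
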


\begin{proof}
See \cite[Theorem 2.3(b)]{RaicuWeyman}.
\end{proof}

We also have the following vanishing result.

\begin{lemma}\label{Bottlemma2}
Let $\lambda \in \mathbb{Z}^k_{\textnormal{dom}}$. If $\lambda_p\geq p-k$ for some $0\leq p\leq k$, then 
$$
H^{j}(\mathbb{G},\mathbb{S}_{\lambda}\mc{Q}\otimes \mathcal{R})=0\quad \textnormal{for all $j\geq (N-k)(k-p)+1$}.
$$
\end{lemma}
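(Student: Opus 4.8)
The plan is to apply Bott's theorem (Theorem \ref{Bott_thm}) directly to the locally free sheaf $\mathbb{S}_{\lambda}\mc{Q}\otimes \mc{R}$, viewing $\mc{R}=\mathbb{S}_{(1,0,\dots,0)}\mc{R}$, and to show that the (unique) cohomological degree in which this sheaf can have nonzero cohomology is at most $(N-k)(k-p)$. So the whole statement reduces to one bound on the integer $\sigma$ appearing in Theorem \ref{Bott_thm}.

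Concretely, I would first record the weight $\gamma=(\lambda_1,\dots,\lambda_k,1,0,\dots,0)\in\mathbb{Z}^N$ and compute $\gamma+\rho$ for $\rho=(N-1,\dots,1,0)$. Its first $k$ entries form the ``$\mc{Q}$-part'' $(\lambda_1+N-1,\dots,\lambda_k+N-k)$, and its last $N-k$ entries form the ``$\mc{R}$-part'' $(N-k,\,N-k-2,\,N-k-3,\dots,1,0)$, in which the value $N-k-1$ is missing. Both blocks are already strictly decreasing, so the minimal number of simple transpositions sorting $\gamma+\rho$ is just the number of cross-inversions between the two blocks; that is, $\sigma=\sum_{i=1}^{k}c_i$, where $c_i$ is the number of $\mc{R}$-part entries strictly larger than $\lambda_i+N-i$. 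Trivially $c_i\leq N-k$ for every $i$.

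Now I would use the hypothesis $\lambda_p\geq p-k$. Since $\lambda$ is dominant, $\lambda_i\geq\lambda_p\geq p-k\geq i-k$ for all $i\leq p$, hence $\lambda_i+N-i\geq N-k$, and since $N-k$ is precisely the maximum of the $\mc{R}$-part, we get $c_i=0$ for $i\leq p$. Therefore $\sigma=\sum_{i=p+1}^{k}c_i\leq(k-p)(N-k)$. By Theorem \ref{Bott_thm}, the cohomology of $\mathbb{S}_{\lambda}\mc{Q}\otimes\mc{R}$ is concentrated in degree $\sigma$ (and is nonzero only when $\gamma+\rho$ has distinct entries), so it vanishes in all degrees $\geq(N-k)(k-p)+1$. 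The boundary cases are immediate: $p=0$ is vacuous because $\dim\GG=(N-k)k$, and $p=k$ forces every $c_i=0$, so $\sigma=0$.

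There is no serious obstacle here; the only place that requires care is the bookkeeping for $\gamma+\rho$ — in particular correctly identifying the $\mc{R}$-part as $(N-k,N-k-2,\dots,0)$ and noting that its maximum equals exactly $N-k$, which is what lets the inequality $\lambda_i+N-i\geq N-k$ for $i\leq p$ annihilate the first $p$ contributions to $\sigma$. I would also mention, and then set aside, the alternative route through the tautological sequence $0\to\mc{R}\to V\otimes\mathcal{O}_{\GG}\to\mc{Q}\to 0$ tensored with $\mathbb{S}_{\lambda}\mc{Q}$: combined with Pieri's rule and Lemma \ref{Bottlemma} it controls $H^j$ only for $j\geq(N-k)(k-p)+2$, since the connecting homomorphism in the long exact sequence could a priori leave a contribution in degree $(N-k)(k-p)+1$; the direct Bott computation above avoids this gap and is the version I would write up.
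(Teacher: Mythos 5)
Your proof is correct and takes essentially the same route as the paper: both apply Bott's theorem directly to the weight $\gamma=(\lambda_1,\dots,\lambda_k,1,0,\dots,0)$, observe that $\lambda_p\geq p-k$ forces the first $p$ entries of $\gamma+\rho$ to be at least $N-k$, the maximum of the $\mc{R}$-block $(N-k,N-k-2,\dots,0)$, and conclude that sorting requires at most $(N-k)(k-p)$ simple transpositions. Your cross-inversion bookkeeping merely makes explicit the paper's statement that the $\mc{R}$-block entries are never moved past position $p$.
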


\begin{proof}
We apply Bott's algorithm to $\gamma = (\lambda_1,\cdots,\lambda_k, 1,0,\cdots,0)$. Let $\rho=(N-1,N-2,\cdots,1,0)$, so that 
	$$
	\gamma+\rho=(\lambda_1+N-1,\cdots,\lambda_k+N-k, N-k,N-k-2,\cdots,0).
	$$
Since $\lambda_p\geq p-k$ we have $(\gamma+\rho)_p\geq N-k$. On the other hand, we have $(\gamma+\rho)_{k+1}=N-k$. Thus, when sorting $\gamma+\rho$ into strictly decreasing order, $(\gamma+\rho)_{k+1}$ is not moved past $(\gamma+\rho)_p$. In particular, there are at most $(N-k)(k-p)$ simple transpositions when applying Bott's algorithm to sort $(\gamma+\rho)$. By Theorem \ref{Bott_thm} we obtain the desired result.
\end{proof}

\subsection{Geometric vector bundles and spaces of relative matrices}\label{relativeSetting}

Given a locally free sheaf $\mathcal{E}$ of finite rank on a variety $B$, we write $\Sym_B(\mathcal{E})$ for the symmetric algebra on $\mathcal{E}$ over $B$. The \defi{geometric vector bundle} associated to $\mathcal{E}$ is (see \cite[Exercise II.5.18]{hartshorne}):
$$
\mathbb{A}_B(\mathcal{E}):=\mathbf{Spec}\left(\Sym_B(\mathcal{E})\right).
$$
We will be interested in geometric vector bundles of the form
$$
X_B(\mathcal{E}_1,\mathcal{E}_2):=\mathbb{A}_B(\mathcal{E}_1\otimes \mathcal{E}_2),
$$
where $\mathcal{E}_1$ and $\mathcal{E}_1$ are locally free sheaves of ranks $n_1$ and $n_2$, respectively. We have that $X_B(\mathcal{E}_1,\mathcal{E}_2)$ is locally isomorphic to $n_1\times n_2$ matrices over $B$, and thus admits a stratification by matrix rank. We write $O_{B,p}(\mathcal{E}_1,\mathcal{E}_2)$ for the strata of matrices of rank $p$, for $0\leq p\leq \min(n_1,n_2)$.

Similarly, for a locally free sheaf $\mathcal{E}$ of rank $n$, we write $X^{skew}_B(\mathcal{E}):=\mathbb{A}_B(\bigwedge^2\mathcal{E})$, which is locally isomorphic to $n\times n$ skew-symmetric matrices over $B$. We write $O_{B,p}(\mathcal{E})$ for the strata of skew-symmetric matrices of rank $2p$, for $0\leq p\leq \lfloor n/2 \rfloor$.

\subsection{A consequence of graded local duality}\label{localDualSection}
In this subsection, we let $S=\mathbb{C}[x_1,\cdots,x_d]$, endowed with the standard grading and homogeneous maximal ideal $\mathfrak{m}$. We let $(-)^\vee$ denote the graded Matlis dual functor \cite[Section 3.6]{BrunsHerzog}, with applied to a graded $S$-module $M$ satisfies
\begin{equation*}
    (M^\vee)_\ell = \Hom_\CC(M_{-\ell}, \CC),\quad \textnormal{for $\ell\in \mathbb{Z}$}. 
\end{equation*}

We store the following consequence of graded local duality for use in our arguments. We define the minimal socle degree of $M$ to be the minimal degree of a minimal generator of $\Soc(M)$.

\begin{lemma}\label{minimalsocle}
Let $M$ be a finitely generated graded $S$-module. Then we have 
$$\Soc(H^{d-j}_\frakm(M))^\vee \cong  \Ext^j_S(M, S(-d))\otimes_S S/\mathfrak{m}.$$

In particular, the negative of the minimal socle degree of $H^{d-j}_\frakm(M)$ is the same as the maximal degree of a minimal generator of $\Ext^j_S(M, S(-d))$.
\end{lemma}
\begin{proof}
By \cite[Exercise 12]{Huneke}, we have the following isomorphism for a finitely-generated $S$-module $A$, and any $S$-module $B$:
\begin{equation*}
    \Ext^i_S(A, B)^\vee  \cong \Tor^S_i(A,B^\vee), 
\end{equation*}
a fact which can be proven via Hom-tensor adjointness, in a similar manner to the argument in \cite[Example 3.6]{Huneke} as follows: Take a free resolution $F^\bullet$ of $A$, then the homology of $F^\bullet \otimes_S B^\vee$ is $\Tor^S_i(A,B^\vee)$. Moreover, since $(F^\bullet \otimes_S B^\vee)^\vee \cong \Hom_S(F^\bullet, B)$, taking the cohomology and applying the Matlis dual on both sides result in the desired isomorphism.

Setting $i=0$, $A=S/\mathfrak{m}$, and $B=H^{d-j}_\frakm(M)$, we obtain
\begin{equation}\label{consOfHuneke}
    \Soc(H^{d-j}_\frakm(M))^\vee \cong \Hom_S(S/\mathfrak{m}, H^{d-j}_{\frakm}(M))^\vee 
    \cong \Tor^S_0(S/\mathfrak{m}, H^{d-j}_\frakm(M)^\vee).
\end{equation}
By graded local duality \cite[Theorem 3.6.19]{BrunsHerzog}, we have in the category of graded modules,
\begin{equation}\label{gld}
    H^{d-j}_\frakm(M)^\vee \cong \Ext^j_S(M, S(-d)).
\end{equation}
Combining (\ref{consOfHuneke}) and (\ref{gld}) gives the desired result.
\end{proof}

\section{Generic matrices}

In this section, we consider the polynomial ring $S=\Sym(\CC^m \otimes \CC^n)$ on the space of $m\times n$ generic matrices, with $m\geq n$. This ring is endowed with an action of the group $\GL=\GL_m(\CC)\times \GL_n(\CC)$.

\subsection{$\GL$-equivariant $\D$-modules on general matrices}\label{sectionDgen} Let $\mathcal{D}$ denote the Weyl algebra of differential operators on $S$. All of our $\D$-modules are left $\D$-modules. For each $p=0,\cdots ,n$, we write $D_p$ for the intersection homology $\D$-module associated to the trivial local system on matrices of rank $p$ \cite[Definition 3.4.1]{hotta2007d}. As we are only interested in the $S$-module structure of $D_p$, the reader less versed in the theory of $\D$-modules may take the descriptions (\ref{Raicu18quot}) and (\ref{lcmaxminors}) below as definitions of the modules $D_p$ for the purposes of this paper. 

The module $D_p$ decomposes into irreducible $\GL$-representations as follows \cite[Section 5]{Raicu16}:
\begin{equation}\label{Dp_representation}
		D_p = \bigoplus_{\lambda\in W^p} \Schur_{\lambda(p)} \CC^m \otimes \Schur_\lambda \CC^n,
\end{equation}
	where $\lambda(p)=(\lambda_1,\cdots ,\lambda_p,(p-n)^{m-n},\lambda_{p+1}+(m-n),\cdots ,\lambda_n+(m-n))$, and
	$$
	W^p = \{\lambda \in \ZZ_{\Dom}^n : \lambda_p \geq p-n,\; \lambda_{p+1}\leq p-m \}.
	$$
For example, we have that $D_n=S$, and (\ref{Dp_representation}) recovers the \defi{Cauchy identity}
\begin{align}\label{cauchy}
	S = \bigoplus_{\lambda=(\lambda_1\geq \cdots \geq \lambda_n\geq 0)} \Schur_\lambda \CC^m \otimes \Schur_\lambda \CC^n.
\end{align}
In the case of square matrices ($m=n$), each $D_p$ appears as a $\D$-simple composition factor of $S$ localized at the $n\times n$-determinant $\Det := \Det(x_{ij})$, where $\D$ acts on $S_{\det}$ via natural differentiation (by the quotient rule). Indeed, if we write $\langle \Det^{-i} \rangle_{\D}$ for the $\DD$-submodule of $S_{\Det}$ generated by $\Det^{-i}$, then the composition series of $S_{\det}$ is given by \cite[Theorem 1.1]{Raicu16}
	\begin{align}\label{filtration_set}
		0 \subsetneq S  \subsetneq \langle \Det^{-1} \rangle_\DD \subsetneq \langle \Det^{-2} \rangle_\DD \subsetneq ... \subsetneq \langle \Det^{-n} \rangle_\DD  = S_{\Det},
	\end{align}
	from which we get the simple $\DD$-module 
	\begin{equation}\label{Raicu18quot}
	 D_p = \langle \Det^{p-n} \rangle_\DD / \langle \Det^{p-n+1} \rangle_\DD.   
	\end{equation}
	
In the case of non-square matrices, each module $D_p$ arises as a local cohomology module of $S$ with support in the ideal of maximal minors $I=I_n$. Indeed, the nonzero local cohomology modules are given by \cite[(5.1)]{Raicu16} (see also \cite{RaicuWeymanWitt, RaicuWeyman}):
\begin{equation}\label{lcmaxminors}
H_I^{(m-n)(n-p)+1}(S)=D_p,\quad \textnormal{for $p=0,\cdots,n-1$.}    
\end{equation}

\subsection{The $S$-module structure of $D_p$}\label{genericModuleSection}

We freely use notation from Section \ref{sectionDgen}. Given $\lambda\in W^p$, and a $\GL$-equivariant $S$-submodule $M\subseteq D_p$, we write $M_{\lambda}$ for the $\GL$-isotypic component of $M$ corresponding to $\Schur_{\lambda(p)} \CC^m \otimes \Schur_\lambda \CC^n$ (by equivariant $S$-submodule, we mean an $S$-submodule that is preserved by the $\GL$ action on $D_p$).

The main result of this subsection is the following.

\begin{theorem}\label{S_structure_Dp}
	Let $0\leq p\leq n$ and let $\lambda,\mu \in W^p$. 
	\begin{enumerate}
		\item We have that $D_{p,\lambda}$ generates $D_{p,\mu}$ over $S$ if and only if $\lambda \leq \mu$.
		\item In particular, if $M$ is a $\GL$-equivariant $S$-submodule of $D_p$, then $M$ is generated by $M_{\lambda}$ over all $\lambda$ minimal for which $M_{\lambda}\neq 0.$
	\end{enumerate} 
\end{theorem}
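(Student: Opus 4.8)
The plan is to establish Theorem \ref{S_structure_Dp}(1) directly; part (2) is then an immediate formal consequence, since any $\GL$-equivariant $S$-submodule $M$ is the direct sum of those $D_{p,\lambda}$ it contains, and (1) tells us that containing $D_{p,\lambda}$ forces containing $D_{p,\mu}$ for all $\mu\ge\lambda$, so the $\lambda$ that are minimal with $M_\lambda\ne 0$ generate everything. So the work is entirely in part (1).

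For the "if" direction (that $\lambda\le\mu$ implies $D_{p,\lambda}$ generates $D_{p,\mu}$), I would argue by reducing to the case where $\mu$ covers $\lambda$ in the partial order, i.e. $\mu=\lambda+e_i$ for some index $i$ with $\lambda+e_i$ still in $\ZZ^n_{\mathrm{dom}}$ (and in $W^p$); a general $\lambda\le\mu$ is reached by a chain of such covers, and we must check that each intermediate weight stays in $W^p$ — this is where the shape of $W^p=\{\lambda:\lambda_p\ge p-n,\ \lambda_{p+1}\le p-m\}$ matters, and one should note that the two defining inequalities only constrain coordinates $p$ and $p+1$, so incrementing coordinates can only help the first inequality and the chain can be chosen to avoid violating the second (increment coordinates $\le p$ first, up to whatever is needed, being careful never to raise $\lambda_{p+1}$ above $p-m$). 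Given a single cover, the claim is that multiplying the highest-weight vector of $\Schur_{\lambda(p)}\CC^m\otimes\Schur_\lambda\CC^n$ by a suitable linear form (a coordinate $x_{ab}$ of the generic matrix, which transforms as $\CC^m\otimes\CC^n$) produces a nonzero component in the $\mu(p)$-isotypic piece. Concretely, $S_1 = \CC^m\otimes\CC^n$, and by Pieri's rule the tensor product $(\Schur_{\lambda(p)}\CC^m\otimes\Schur_\lambda\CC^n)\otimes(\CC^m\otimes\CC^n)$ contains $\Schur_{\mu(p)}\CC^m\otimes\Schur_\mu\CC^n$ exactly when $\mu(p)/\lambda(p)$ and $\mu/\lambda$ are each a single box (possibly a "negative box", i.e. a removed box from the conjugate side — since these are $\GL$-weights with negative entries, one works with $\Schur_\nu\CC^N=\Schur_{\nu-(c^N)}\CC^N\otimes(\det)^{c}$ to reduce to polynomial functors); the point is that when $\mu=\lambda+e_i$, the corresponding change $\lambda(p)\to\mu(p)$ is also the addition of a single box (in coordinate $i$ if $i\le p$, or coordinate $i+m-n$ if $i>p$), so the Pieri constituent is present with multiplicity one. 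It then remains to see that the $S$-module map $D_{p,\lambda}\otimes S_1\to D_p$ (multiplication) is nonzero on this constituent — equivalently that the relevant component of $D_{p,\mu}$ is not killed. For this I would invoke the $\D$-module structure: $D_p$ is generated over $\D$ by any nonzero isotypic component (this follows from simplicity of $D_p$ as a $\D$-module, \ref{lcmaxminors}), and the filtration of $D_p$ by $S$-submodules is controlled by the weight poset; a clean way is to use the geometric/cohomological description via Lemma \ref{Bottlemma} and Bott's theorem, realizing $D_p$ (or rather a model of it) as a cohomology of a pushforward from the relative matrix space over a Grassmannian, where $S$-multiplication corresponds to the natural map induced by the tautological surjection $V\otimes\mathcal O_\GG\to\Q$ and nonvanishing of the Pieri constituent can be read off at the level of vector bundles.

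For the "only if" direction (that $D_{p,\lambda}$ generating $D_{p,\mu}$ forces $\lambda\le\mu$), the key observation is degree-theoretic combined with the order structure. Each $\Schur_{\lambda(p)}\CC^m\otimes\Schur_\lambda\CC^n$ sits in a single internal degree of $S$ — namely $|\lambda| = \sum\lambda_i$ (suitably normalized, up to the shift built into $\lambda(p)$) — and multiplication by $S_1$ raises degree by $1$ and, by Pieri, can only move $\lambda$ to a weight $\lambda'$ with $\lambda'\ge\lambda$ coordinatewise when we restrict to the constituents that actually occur inside $D_p$ (because inside $D_p$ the $\CC^n$-side Schur functors all have partitions/weights that only grow — this is forced by the defining inequalities of $W^p$ together with the Cauchy-type structure). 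More carefully: if $S_{\ge 0}\cdot D_{p,\lambda}$ contains $D_{p,\mu}$ then $\mu$ is obtained from $\lambda$ by repeatedly adding Pieri boxes within $W^p$, and I claim every such step is coordinatewise non-decreasing on the $\CC^n$-side. This is the crux: one has to rule out the a priori possibility of a Pieri step that decreases some $\lambda_j$ while increasing another. I expect this to be the main obstacle. The resolution should come from examining which weights $\nu\in\ZZ^n$ with $\Schur_\nu\CC^m$-partner (via $\nu\mapsto\nu(p)$) actually appear in $D_p$: the map $\nu\mapsto\nu(p)$ is only order-preserving and the interleaving constraints $\nu_p\ge p-n$, $\nu_{p+1}\le p-m$ pin things down enough that the only Pieri moves staying inside $W^p$ are the "upward" ones. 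Once this monotonicity of admissible Pieri steps is in hand, transitivity of $\le$ gives $\lambda\le\mu$, completing the proof.

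One technical point worth flagging: throughout I am conflating "box addition" for dominant weights with negative entries with genuine Pieri's rule for partitions; the safe route is to twist by a large power of the determinant character on each factor to make all the Schur functors polynomial, apply the classical Pieri/Littlewood–Richardson rule there, and untwist — the determinant twist is harmless because it is a $1$-dimensional character, it commutes with everything, and it shifts all relevant weights uniformly so it preserves the partial order and the $S$-module structure. With that normalization fixed at the outset, both directions reduce to combinatorics of adding/removing single boxes, plus the one nonvanishing input from the $\D$-module simplicity of $D_p$ (or equivalently from Bott's theorem applied to the geometric model), and part (2) follows formally.
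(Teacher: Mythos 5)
Your outline gets the easy parts right (part (2) is formal, the reduction to covers $\mu=\lambda+e_i$ works because $W^p$ is an interval for the order --- if $\lambda\le\gamma\le\mu$ with $\lambda,\mu\in W^p$ then $\gamma_p\ge\lambda_p\ge p-n$ and $\gamma_{p+1}\le\mu_{p+1}\le p-m$ --- and the ``only if'' direction), but the step you leave to a gesture is precisely the content of the theorem: showing that the multiplication map $S_1\otimes D_{p,\lambda}\to D_{p,\mu}$ is nonzero for a cover $\lambda\le\mu$. Your first suggestion, simplicity of $D_p$ as a $\D$-module, does not do this: simplicity gives generation over $\D$, and differential operators move weights downward as well as upward, so it says nothing about whether the purely $S$-linear Pieri constituent survives. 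Your second suggestion (``read it off from the geometric model via Bott'') is the paper's route, but the substance is missing. The paper first proves the square case $m=n$ (Lemma \ref{SstructureSquare}) by localizing at $\Det$, twisting by a power of the determinant, and invoking the de Concini--Eisenbud--Procesi theorem (Theorem \ref{dCEP_module_structure}) --- an input your proposal never mentions; this yields the relative statement (\ref{consOfSquare}) for $D_p^{\Q}$ on the bundle of square $n\times n$ matrices over $\Grass(n,\CC^m)$. It then identifies $\mathbb{R}^{(m-n)(n-p)}\pi_{\ast}D_p^{\Q}\cong D_p$ \emph{as $S$-modules} (Lemma \ref{pushdownGen}, which itself uses the local cohomology description (\ref{lcmaxminors}), not just Bott on each isotypic summand), and finally proves surjectivity of the pushed-forward map by filtering the kernel of $(\CC^m\otimes\CC^n)\otimes D^{\Q}_{p,\lambda}\twoheadrightarrow D^{\Q}_{p,\mu}$ into the piece $(\mc{R}\otimes\CC^n)\otimes D^{\Q}_{p,\lambda}$ coming from the tautological sequence and the Pieri constituents $\Schur_\sigma\Q\otimes\Schur_\nu\CC^n$, and checking via Lemmas \ref{Bottlemma} and \ref{Bottlemma2} that none of these has cohomology in degree $(m-n)(n-p)+1$. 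Without this kernel-vanishing computation (or some substitute), your argument has no proof that the crucial constituent is not killed, so the ``if'' direction is not established.

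You also misplace the difficulty in the ``only if'' direction: there is no ``negative box'' issue to rule out. Since $S_1=\CC^m\otimes\CC^n$ is the standard representation on each factor, Pieri for rational representations gives $\Schur_\lambda\CC^n\otimes\CC^n\cong\bigoplus_i\Schur_{\lambda+e_i}\CC^n$ (over $i$ with $\lambda+e_i$ dominant), and similarly on the $\CC^m$ side; every constituent reached by multiplication has weight $\ge\lambda$, so $D_{p,\lambda}$ generating $D_{p,\mu}$ forces $\lambda\le\mu$ immediately, with no monotonicity lemma needed. The genuine crux is the nonvanishing discussed above, and that is where your proposal is incomplete.
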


Since $D_p$ is multiplicity-free as a representation of $\GL$ (meaning each irreducible representation appears at most once), the second assertion follows from the first.	

We note first that Theorem \ref{S_structure_Dp} holds in the case $p=n$.

\begin{theorem}\textnormal{(see \cite[Theorem 4.1, Corollary 4.2]{deConciniEisenbudProcesi})}\label{dCEP_module_structure}
Given two partitions $\lambda$ and $\mu$, we have that $S_{\lambda}$ generates $S_{\mu}$ if and only if $\lambda \leq \mu$.
\end{theorem}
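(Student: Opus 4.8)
The final statement is the cited theorem of De Concini--Eisenbud--Procesi on the $S$-module structure of the coordinate ring $S = \Sym(\CC^m\otimes\CC^n)$: for partitions $\lambda,\mu$ (with at most $n$ parts, so that $\Schur_\mu\CC^n\neq 0$), the isotypic component $S_\lambda$ generates $S_\mu$ over $S$ if and only if $\lambda\le\mu$ in the componentwise partial order. Since $S$ is multiplicity-free (the Cauchy identity (\ref{cauchy})), each $S_\mu \cong \Schur_\mu\CC^m\otimes\Schur_\mu\CC^n$ is a well-defined submodule-generating target. The strategy I would take has two halves: the ``only if'' direction via a degree/weight obstruction, and the ``if'' direction via an explicit multiplication witness combined with induction along covering relations in the poset of partitions.

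\textbf{The ``only if'' direction.} Suppose $S_\lambda$ generates $S_\mu$. Multiplication $S_1\otimes S_\lambda \to S$ lands, by the Pieri/Littlewood--Richardson rule applied in both $\GL_m$ and $\GL_n$ factors, in the sum of those $S_\nu$ with $\nu$ obtained from $\lambda$ by adding a single box (the degree-one piece $S_1 = \CC^m\otimes\CC^n$ corresponds to adding one box). Iterating, $S\cdot S_\lambda$ is supported on partitions $\nu$ obtained from $\lambda$ by adding a skew shape, i.e. with $\nu\ge\lambda$ componentwise; moreover $|\nu|\ge|\lambda|$ with equality only when $\nu=\lambda$. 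Hence if $S_\mu\subseteq S\cdot S_\lambda$ then necessarily $\mu\ge\lambda$. (One must check the degree bookkeeping: $S_\lambda$ sits in degree $|\lambda|$ and $S_\mu$ in degree $|\mu|$, and the partial order is exactly the ``reachable by adding boxes'' relation on partitions with $\le n$ rows.)

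\textbf{The ``if'' direction.} Assume $\lambda\le\mu$. By transitivity it suffices to treat the case where $\mu$ covers $\lambda$, i.e. $\mu = \lambda + e_i$ for some row $i$ with $\lambda_{i-1}>\lambda_i$ (so $\mu$ is again a partition); then general $\mu\ge\lambda$ follows by composing such one-box steps along any saturated chain from $\lambda$ to $\mu$ in the poset, using that generation is transitive. For the covering step, the key point is that the multiplication map
$$
\mu_{\mathrm{mult}}\colon S_1\otimes S_\lambda \longrightarrow \bigoplus_{\nu = \lambda + \square} S_\nu
$$
has \emph{nonzero} projection onto the $S_{\lambda+e_i}$ summand. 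This is a statement purely about the Cauchy decomposition of $\Sym(\CC^m\otimes\CC^n)$ as a ring: the component $S_\lambda\cdot S_{e_1}$ surjects onto each $S_{\lambda+\square}$. I would verify this by an explicit highest-weight vector computation --- writing the highest weight vector of $S_\lambda$ as a product of maximal minors $\Det_{[1..k]}$ of the generic matrix (with $\lambda$ decoded as a product of such minors via $\lambda' $), multiplying by a single suitable variable $x_{ab}$, and checking the result has nonzero component in the $\Schur_{\lambda+e_i}$-isotypic piece by pairing against the corresponding highest weight functional --- or, more cleanly, by citing the standard fact that $\Sym(\CC^m\otimes\CC^n) = \CC[x_{ab}]$ is generated in degree one and the associated graded of the $\lambda$-filtration identifies multiplication with the Pieri coproduct, which is surjective onto each one-box-larger piece. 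Either route gives $S_{\lambda+e_i}\subseteq S\cdot S_\lambda$, completing the covering step.

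\textbf{Main obstacle.} The genuine content is the surjectivity of the one-box Pieri multiplication $S_\lambda\otimes S_1 \twoheadrightarrow S_{\lambda+\square}$ inside the \emph{ring} $S$ --- a priori the Littlewood--Richardson coefficients tell us the multiplicity is one on the representation-theoretic side, but one must know the ring multiplication does not kill that summand. I expect this to be the crux; it is exactly the place where De Concini--Eisenbud--Procesi do real work (using standard monomial / straightening-law bases for $S$, under which $S_\lambda$ is spanned by standard bitableaux of shape $\lambda$ and multiplication by $x_{ab}$ followed by straightening visibly produces a standard bitableau of shape $\lambda+e_i$). Since the statement is quoted from their paper, in the writeup I would either invoke their Theorem 4.1 / Corollary 4.2 directly, or reproduce just this Pieri-surjectivity lemma via straightening and deduce both directions from it as above. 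The ``only if'' direction and the reduction to covering relations are routine poset and weight-counting arguments.
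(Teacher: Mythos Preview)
The paper does not prove this statement at all: it is quoted verbatim from De Concini--Eisenbud--Procesi and used as a black box, with no argument supplied. So there is nothing in the paper to compare your proposal against.

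That said, your sketch is a faithful outline of the standard approach. The ``only if'' direction and the reduction to covering relations are indeed routine, and you correctly identify the one nontrivial point: that the ring multiplication $S_1\otimes S_\lambda\to S$ actually hits each Pieri summand $S_{\lambda+e_i}$ nonzero. You are also right that this is precisely what the standard-monomial/straightening machinery in \cite{deConciniEisenbudProcesi} establishes. Since the paper is content to cite this result, your suggestion to ``invoke their Theorem 4.1 / Corollary 4.2 directly'' is exactly what the authors do; the rest of your write-up is a correct expansion of what lies behind that citation, but is not needed for the paper's purposes.
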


Next, we address the case of square matrices.

\begin{lemma}\label{SstructureSquare}
If $m=n$ then Theorem \ref{S_structure_Dp} holds.
\end{lemma}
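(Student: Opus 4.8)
The plan is to reduce the square case to the full-rank case, Theorem \ref{dCEP_module_structure}, by localizing at the determinant $\Det := \Det(x_{ij})\in S$. Note first that when $m=n$ the shift $\lambda(p)$ appearing in (\ref{Dp_representation}) collapses to $\lambda$ itself, so $D_p = \bigoplus_{\lambda\in W^p}\Schur_\lambda\CC^n\otimes\Schur_\lambda\CC^n$ with $W^p = \{\lambda\in\ZZ^n_{\Dom} : \lambda_p\geq p-n,\ \lambda_{p+1}\leq p-n\}$; for $p=n$ this is precisely the set of partitions and $D_n = S$, so that case of Theorem \ref{S_structure_Dp} is literally Theorem \ref{dCEP_module_structure}. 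Since $D_p$ is multiplicity-free as a $\GL$-representation, part (2) of Theorem \ref{S_structure_Dp} follows formally from part (1), so it suffices to prove (1), and we may assume $0\leq p\leq n-1$.

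For the implication ``$D_{p,\lambda}$ generates $D_{p,\mu}$ $\implies$ $\lambda\leq\mu$'' I would give a Pieri argument, valid for arbitrary $m\geq n$: the ring $S$ is generated in degree one by $\CC^m\otimes\CC^n$, so multiplying the isotypic component $D_{p,\lambda}$ by a linear form and decomposing the result within the multiplicity-free decomposition (\ref{Dp_representation}) can only produce summands $D_{p,\mu}$ whose $\GL_n$-label $\mu$ is obtained from $\lambda$ by adding a single box; iterating over the degree forces $\mu\geq\lambda$.

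For the reverse implication, fix $\lambda\leq\mu$ in $W^p$. By (\ref{Raicu18quot}) and (\ref{filtration_set}) we have $D_p = M_p/M_{p+1}$, where $M_i := \langle\Det^{\,i-n}\rangle_\D$ is a $\GL$-equivariant $S$-submodule of $S_{\Det}$ (with $M_0 = S_{\Det}$ and $M_n = S$). Inverting $\Det$ in the Cauchy decomposition (\ref{cauchy}) gives $S_{\Det} = \bigoplus_{\nu\in\ZZ^n_{\Dom}}\Schur_\nu\CC^n\otimes\Schur_\nu\CC^n$, which is again multiplicity-free; hence the label sets $W^0,\dots,W^n$ of the $\GL$-composition factors $D_0,\dots,D_n$ of $S_{\Det}$ are pairwise disjoint and partition $\ZZ^n_{\Dom}$. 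Consequently, for $\lambda\in W^p$ we get $(M_{p+1})_\lambda = 0$, so the quotient map $\pi\colon M_p\to D_p$ restricts to an isomorphism $(S_{\Det})_\lambda = (M_p)_\lambda\cong D_{p,\lambda}$, and likewise for $\mu$. Since $M_p$ is an $S$-submodule of $S_{\Det}$ containing $(S_{\Det})_\lambda$, and $\pi$ is a surjective $S$-linear map, it is therefore enough to show that $(S_{\Det})_\lambda$ generates $(S_{\Det})_\mu$ over $S$. To see this, choose $k\gg 0$ so that $\lambda+(k^n)$ and $\mu+(k^n)$ are partitions; Theorem \ref{dCEP_module_structure} gives $S\cdot S_{\lambda+(k^n)}\supseteq S_{\mu+(k^n)}$ inside $S$. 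Multiplication by $\Det^{-k}$ is an $S$-linear automorphism of $S_{\Det}$, and since $\Det^{-k}$ spans the $(-k)$th power of the determinant character, it carries the isotypic component $(S_{\Det})_\nu$ isomorphically onto $(S_{\Det})_{\nu-(k^n)}$; applying it to the displayed containment yields $S\cdot(S_{\Det})_\lambda\supseteq(S_{\Det})_\mu$, as desired.

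I expect the only points requiring care to be the two pieces of representation-theoretic bookkeeping: that $\pi$ does not kill the relevant isotypic components --- equivalently, that the $W^q$ are pairwise disjoint, which is just multiplicity-freeness of $S_{\Det}$ --- and that multiplication by $\Det^{-k}$ intertwines the $\GL$-action only after the shift $\nu\mapsto\nu-(k^n)$, which is exactly the device that transports Theorem \ref{dCEP_module_structure} from partitions to all dominant weights. Neither is deep, but both must be stated carefully since the entire argument runs through the $\GL$-isotypic decomposition.
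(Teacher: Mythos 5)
Your argument is correct and follows essentially the same route as the paper: localize at the determinant, identify the isotypic components of $S_{\Det}$ via the Cauchy decomposition, transport Theorem \ref{dCEP_module_structure} from partitions to arbitrary dominant weights by an $S$-linear twist with a power of $\Det$, and descend to $D_p$ as a subquotient of $S_{\Det}$ through the filtration (\ref{filtration_set}). The only cosmetic difference is in the descent step, where you use disjointness of the label sets $W^q$ and the $S$-linear projection $\langle \Det^{p-n}\rangle_{\DD}\to D_p$, whereas the paper records the order-convexity of $W^p$ (and leaves the ``only if'' direction implicit, which you supply with a Pieri argument).
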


\begin{proof}
We use the filtration (\ref{filtration_set}), and the fact that each $D_p$ appears as a subquotient. Since $S_{\Det}=\bigcup_{d\geq 0} S\cdot \Det^{-d}$, it follows from (\ref{cauchy}) we have
\begin{equation}
	S_{\Det} = \bigoplus_{\lambda \in \ZZ^n_{\Dom}} \Schur_\lambda \CC^n \otimes \Schur_\lambda \CC^n.
\end{equation}
Another way to see this is that $S_{\Det}$ is a direct sum of $D_0,\cdots, D_n$ as a representation of $\GL$. Suppose that $\lambda, \mu \in \ZZ^n_{\Dom}$ and $\lambda \leq \mu$. Let $a=\lambda_n$. If $a\geq 0$ then $S_{\Det,\lambda}$ and $S_{\Det,\mu}$ belong to $S$, so we get that $S_{\Det,\lambda}$ generates $S_{\Det,\mu}$ by Theorem \ref{dCEP_module_structure}. Otherwise, twist by $\det^{-a}$ and use Theorem \ref{dCEP_module_structure} to get that $S_{\Det,\lambda}\cdot \det^{-a}$ generates $S_{\Det,\mu}\cdot \det^{-a}$. Therefore, $S_{\Det,\lambda}$ generates $S_{\Det,\mu}$.

	Since $D_p$ is the quotient $\langle \Det^{p-n} \rangle_\DD / \langle \Det^{p+1-n} \rangle_\DD$, we only need to show that if $\mu$ and $\lambda$ are in $W^p$ and if $\mu \geq \gamma \geq \lambda$ then $\gamma \in W^p$ as well. Since $\lambda \in W^p$, we have $\gamma_p \geq \lambda_p \geq p-n$, and since $\mu \in W^p$, we have $p-n \geq \mu_{p+1} \geq \gamma_{p+1}$. Therefore $\gamma \in W^p$.
\end{proof}

Now assume that $m>n$. We consider the Grassmannian $\mathbb{G}=\operatorname{Gr}(n,\mathbb{C}^m)$ of $n$-dimensional quotients of $\mathbb{C}^m$, with tautological rank $n$ sheaf $\mathcal{Q}$. We will freely use terminology from Section \ref{relativeSetting} on geometric vector bundles and spaces of relative matrices.

We write $Y=X_{\mathbb{G}}(\mathcal{Q},\mathbb{C}^n)$ for the geometric vector bundle associated to the locally free sheaf $\mathcal{Q}\otimes \mathbb{C}^n$, and for $0\leq p\leq n$ we write $O_p^Y=O_{\mathbb{G},p}(\mathcal{Q},\mathbb{C}^n)$ for the strata of matrices of rank $p$. We have the following diagram:
$$
\begin{tikzcd}
Y \arrow[r, hook, "s"] \arrow[dr, "f"] & \mathbb{G} \times \mathbb{C}^{m\times n}  \arrow[d, "\pi"]\\
& \mathbb{C}^{m\times n}
\end{tikzcd}
$$
where $s$ is the inclusion, $\pi$ is the projection, and $f=\pi\circ s$. For $0\leq p\leq n$, we let $O_p$ denote the strata in $\mathbb{C}^{m\times n}$ of matrices of rank $p$. We note that $f$ is a birational isomorphism on the set of matrices of rank $n$, so that $f^{-1}(O_n)=O_n^Y=O_n$. 

Let $S=\Sym(\CC^m\otimes \CC^n)$ be the ring of polynomial functions on $\mathbb{C}^{m\times n}$. We abuse notation and identify $S=\mc{O}_{\mathbb{G}}\otimes S$, viewing $S$ as a sheaf of graded algebras on $\mathbb{G}$. Further, we identify the category of quasi-coherent graded $S$-modules on $\mathbb{G}$ with the category of quasi-coherent sheaves on $\mathbb{C}^{m\times n}\times \mathbb{G}$, graded with respect to the affine space $\mathbb{C}^{m\times n}$ (see \cite[Exercise II.5.17]{hartshorne}).

For $0\leq p\leq n$ we write $\mathcal{L}(O_p^Y, Y)$ for the intersection homology $\D_Y$-module associated to the trivial local system on $O_p^Y$, and we define $D_p^{\mathcal{Q}}=s_{\ast}\mathcal{L}(O_p^Y,Y)$, endowed with its natural structure as a $S^{\mc{Q}}=s_{\ast}\mc{O}_Y=\operatorname{Sym}(\mc{Q}\otimes \mathbb{C}^n)$-module (we write $s_{\ast}$ for the direct image of quasi-coherent sheaves). By (\ref{Dp_representation}) on the space $X_{\mathbb{G}}(\mathcal{Q},\mathbb{C}^n)$ we have the following decomposition of $\mathcal{O}_{\mathbb{G}}$-modules
\begin{equation}\label{relDp}
D_p^{\mathcal{Q}}\cong\bigoplus_{\lambda\in W^p} \mathbb{S}_{\lambda}\mc{Q}\otimes \mathbb{S}_{\lambda}\mathbb{C}^n.
\end{equation}
Given $\lambda \in W^p$, we write $D_{p,\lambda}^{\mathcal{Q}}=\mathbb{S}_{\lambda}\mc{Q}\otimes \mathbb{S}_{\lambda}\mathbb{C}^n$ for the $\lambda$-th isotypic component of $D_p^{\mathcal{Q}}$. Lemma \ref{SstructureSquare} implies that 
\begin{equation}\label{consOfSquare}
\textnormal{$D^{\mc{Q}}_{p,\lambda}$ generates $D^{\mc{Q}}_{p,\mu}$ over $S^{\mc{Q}}$}\quad \iff \quad \lambda \leq \mu.
\end{equation}
Indeed, the direction ``$\implies$" of (\ref{consOfSquare}) follows from the Littlewood-Richardson Rule \cite[Theorem 2.3.4]{Weyman} applied to $S^{\mc{Q}}_{(1)}\otimes D^{\mathcal{Q}}_{p,\lambda}$, whereas the  ``$\impliedby$" direction follows from Lemma \ref{SstructureSquare} since $S^{\mathcal{Q}}$ locally (over an affine open subset of $\mathbb{G}$) looks like the polynomial ring $\Sym(\CC^n\otimes \CC^n)$, and $D_p^{\mc{Q}}$ locally looks like module $D_p$ on the space of $n\times n$ matrices.

The following result relates $D_p^{\mc{Q}}$ via $\pi$ to the $S$-module $D_p$.

\begin{lemma}\label{pushdownGen}
Let $0\leq p\leq n$. We have the following.
\begin{enumerate}
\item $\mathbb{R}^j\pi_{\ast}(D_p^{\mc{Q}})\neq 0$ if and only if $j=(m-n)(n-p)$,
\item there is an isomorphism of $S$-modules $\mathbb{R}^{(m-n)(n-p)}\pi_{\ast}(D_p^{\mc{Q}})\cong D_p$.
\end{enumerate}
\end{lemma}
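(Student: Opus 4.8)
\emph{Proof proposal.} Both assertions will come from Bott's theorem, together with the realization of $D_p$ as a local cohomology module recorded in (\ref{lcmaxminors}).

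\emph{Step 1: reduction to Bott.} Since the base $\CC^{m\times n}$ is affine and $\pi\colon\mathbb{G}\times\CC^{m\times n}\to\CC^{m\times n}$ is the projection off the projective factor $\mathbb{G}=\Grass(n,\CC^m)$, a \v{C}ech computation on an affine cover of $\mathbb{G}$ shows that $\mathbb{R}^j\pi_*$ of a $\CC^{m\times n}$-graded $\mc{O}_{\mathbb{G}}$-module is obtained by applying $H^j(\mathbb{G},-)$ in each internal degree, compatibly with the $S$-module structure. Applying this to (\ref{relDp}) summand by summand yields, as graded $\GL$-representations,
\[
\mathbb{R}^j\pi_*\big(D_p^{\mc{Q}}\big)\;\cong\;\bigoplus_{\lambda\in W^p}\ H^j\big(\mathbb{G},\ \Schur_\lambda\mc{Q}\big)\otimes_{\CC}\Schur_\lambda\CC^n .
\]
As $m>n$, Lemma \ref{Bottlemma} applies with $N=m$ and $k=n$: for $\lambda\in W^p$ (the defining condition $\lambda_p\ge p-n$, $\lambda_{p+1}\le p-m$ of $W^p$ is exactly the hypothesis of Lemma \ref{Bottlemma}(1)), the group $H^j(\mathbb{G},\Schur_\lambda\mc{Q})$ vanishes for $j\neq (m-n)(n-p)$ and equals $\Schur_{\lambda(p)}\CC^m$ for $j=(m-n)(n-p)$, with $\lambda(p)$ exactly the partition appearing in (\ref{Dp_representation}). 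This proves (1), and identifies $\mathbb{R}^{(m-n)(n-p)}\pi_*(D_p^{\mc{Q}})$ with $D_p$ as graded $\GL$-representations by (\ref{Dp_representation}).

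\emph{Step 2: the $S$-module structure.} For $p=n$ one has $\mathbb{R}^0\pi_*(D_n^{\mc{Q}})=\pi_*s_*\mc{O}_Y=f_*\mc{O}_Y=\mc{O}_{\CC^{m\times n}}=S=D_n$, since $f$ is a proper birational morphism from the smooth variety $Y$ onto the smooth (hence normal) variety $\CC^{m\times n}$. For $p<n$ I would compute $D_p=H^{(m-n)(n-p)+1}_I(S)$ through the resolution $f$. Because $f$ is an isomorphism over $O_n=\CC^{m\times n}\setminus V(I)$ and $\mathbb{R}f_*\mc{O}_Y=\mc{O}_{\CC^{m\times n}}$, there is an isomorphism of graded $S$-modules $H^\bullet_I(S)\cong H^\bullet_{f^{-1}(V(I))}(Y,\mc{O}_Y)$. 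Now $f^{-1}(V(I))$ is the relative ``rank ${<}\,n$'' locus in the rank-$n$ matrix bundle $Y\to\mathbb{G}$, a divisor cut out by a nonzerodivisor, so $H^\bullet_{f^{-1}(V(I))}(Y,\mc{O}_Y)$ is concentrated in cohomological degree $1$, where it is the relative analogue of $S_{\Det}/S$; by the relative form of the filtration (\ref{filtration_set}) this module has a filtration whose successive quotients are the relative intersection homology modules $\mc{L}(O_q^Y,Y)$, i.e.\ the $D_q^{\mc{Q}}$, for $q=0,\dots,n-1$ (here we use that $s_*$ is exact). Pushing forward by $\mathbb{R}\pi_*$ and invoking Step 1, the quotient $D_q^{\mc{Q}}$ contributes only in cohomological degree $(m-n)(n-q)$; these degrees are pairwise distinct and decrease as the filtration index $q$ increases, so the resulting spectral sequence degenerates (when $m-n\ge 2$ for purely numerical reasons; when $m-n=1$ one kills the remaining potential differentials by comparing $\GL$-characters with those of the abutment, which are known from Step 1 and (\ref{lcmaxminors})). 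Hence $\mathbb{R}^{(m-n)(n-p)}\pi_*$ of the filtered module equals $\mathbb{R}^{(m-n)(n-p)}\pi_*(D_p^{\mc{Q}})$, and threading the isomorphisms above through (\ref{lcmaxminors}) gives $D_p\cong \mathbb{R}^{(m-n)(n-p)}\pi_*(D_p^{\mc{Q}})$ as $S$-modules.

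\emph{The main obstacle.} Part (1) and the $\GL$-module statement in (2) are routine once Bott's theorem is in hand; the real content is the $S$-linearity of the comparison in (2), which is why I would route through the local cohomology computation above. The ingredients needing care there are the (standard but technical) base-change/\v{C}ech formalism for $\mathbb{R}\pi_*$ of graded $\mc{O}_{\mathbb{G}}$-modules; the relative version of the localization filtration (\ref{filtration_set}) on $Y\to\mathbb{G}$ together with the identification of its graded pieces with the $D_q^{\mc{Q}}$; and the degeneration of the spectral sequence, especially in the borderline case $m-n=1$. A more $\D$-module-theoretic alternative, in the spirit of this paper, is to note that $D_p^{\mc{Q}}=s_*\mc{L}(O_p^Y,Y)$ is a $\D$-module on $\mathbb{G}\times\CC^{m\times n}$ and that the coordinate vector fields on $\CC^{m\times n}$ are $\pi$-related, so $\mathbb{R}^{(m-n)(n-p)}\pi_*(D_p^{\mc{Q}})$ is naturally a $\GL$-equivariant $\D$-module on $\CC^{m\times n}$ with the $\GL$-character of the simple $\D$-module $D_p$; since the characters of the simple $\GL$-equivariant $\D$-modules $D_0,\dots,D_n$ have pairwise disjoint supports, a character count forces an isomorphism $\mathbb{R}^{(m-n)(n-p)}\pi_*(D_p^{\mc{Q}})\cong D_p$ of $\D$-modules, a fortiori of $S$-modules.
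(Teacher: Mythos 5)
Your proposal is correct and takes essentially the same route as the paper: Bott's theorem (Lemma \ref{Bottlemma}) over the affine base gives part (1) and the $\GL$-isomorphism, and the $S$-module identification comes from computing $H^{\bullet+1}_I(S)$ through the resolution --- the paper pushes forward the relative localization $S^{\mc{Q}}_{\det}=s_*i'_*\mc{O}_{O_n^Y}$ and uses $\pi\circ s\circ i'=i$ to get $\mathbb{R}^j\pi_*(S^{\mc{Q}}_{\det})\cong H^{j+1}_I(S)$, which is the same comparison you make via $\mathbb{R}f_*\mc{O}_Y=\mc{O}_{\CC^{m\times n}}$ and $\Gamma_{f^{-1}(V(I))}$, and then both arguments isolate the $D_p^{\mc{Q}}$-contribution using that the composition factors push forward in pairwise distinct cohomological degrees, concluding with (\ref{lcmaxminors}). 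The only differences are cosmetic: you filter $S^{\mc{Q}}_{\det}/S^{\mc{Q}}$ rather than $S^{\mc{Q}}_{\det}$, and you make explicit the character comparison with the known abutment that rules out differentials in the borderline case $m-n=1$, a point the paper's proof passes over silently (your $\D$-module alternative at the end is also viable, but it is not the paper's argument).
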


\begin{proof}
Since $\mathbb{C}^{m\times n}$ is affine, we have $\mathbb{R}^j\pi_{\ast}D_p^{\mc{Q}}\cong H^j(\mathbb{G} \times \mathbb{C}^{m\times n},D_p^{\mc{Q}})$, and since the projection from $\mathbb{G} \times \mathbb{C}^{m\times n}$ to $\mathbb{G}$ is an affine morphism, we conclude that 
$$
\mathbb{R}^j\pi_{\ast}D_p^{\mc{Q}}\cong \bigoplus_{\lambda\in W^p}H^j(\mathbb{G},\mathbb{S}_{\lambda}\mc{Q}\otimes \mathbb{S}_{\lambda}\mathbb{C}^n).
$$
We note that $H^j(\mathbb{G},\mathbb{S}_{\lambda}\mc{Q}\otimes \mathbb{S}_{\lambda}\mathbb{C}^n) = H^j(\mathbb{G}, \Schur_\lambda \mc{Q}) \otimes \Schur_\lambda \CC^n$. Then by (\ref{relDp}) and Lemma \ref{Bottlemma} we obtain the first assertion, and we obtain that the claimed isomorphism in the second assertion holds in the category of $\GL$-representations. The isomorphism holds in the category of $S$-modules because these direct images are calculating local cohomology with support in maximal minors. Indeed, let $i$ denote the inclusion $i: O_n \hookrightarrow \CC^{m\times n}$, and let $i'$ denote the inclusion of $O^Y_n=O_n$ into $Y$. We define
$$
S^{\mathcal{Q}}_{\det}=s_{\ast}i'_{\ast}\mathcal{O}_{O_n^Y},
$$
which is the ``relative version" of the localization at the $n\times n$ determinant. Since $\pi\circ s\circ i'=i$ and $i'$ is affine, we have the following isomorphisms of $S$-modules for all $j\geq 1$:
\begin{equation}\label{lcdescription}
\mathbb{R}^j\pi_{\ast}(S^{\mathcal{Q}}_{\det})\cong\mathbb{R}^ji_{\ast}(\mathcal{O}_{O_n})\cong H^{j+1}_I(S),
\end{equation}
where the latter isomorphism follows from \cite[Exercise III.2.3(e)]{hartshorne} for $\mathscr{F}=\mc{O}_{\CC^{m\times n}}$ and $U=O_n$. Since $S^{\mathcal{Q}}_{\det}$ has composition factors $D_0^{\mathcal{Q}},\cdots , D_n^{\mathcal{Q}}$, each with multiplicity one, and $\mathbb{R}^{j}\pi_{\ast}(D_p^{\mc{Q}})$ is nonzero if and only if $j=(m-n)(n-p)$, we have that $\mathbb{R}^{(m-n)(n-p)}\pi_{\ast}(S^{\mathcal{Q}}_{\det})\cong \mathbb{R}^{(m-n)(n-p)}\pi_{\ast}(D_p^{\mc{Q}})$. Thus, by (\ref{lcdescription}) and (\ref{lcmaxminors}), we have $\mathbb{R}^{(m-n)(n-p)}\pi_{\ast}(D_p^{\mc{Q}})\cong D_p$.  
\end{proof}

We use Lemma \ref{pushdownGen} and (\ref{consOfSquare}) to prove Theorem \ref{S_structure_Dp} in the case $m>n$.

\begin{proof}[Conclusion of proof of Theorem \ref{S_structure_Dp}]
Let $\lambda, \mu \in W^p$ satisfy $\mu\geq \lambda$ and $|\mu|=|\lambda|+1$. By (\ref{consOfSquare}), via the surjection $S\twoheadrightarrow S^{\mc{Q}}$ the multiplication map $S\otimes D_p^{\mc{Q}}\to D_p^{\mc{Q}}$ induces a surjective morphism
\begin{equation}\label{multDpqgen}
(\CC^m\otimes \mathbb{C}^n)\otimes D^{\mc{Q}}_{p,\lambda}\twoheadrightarrow D^{\mc{Q}}_{p,\mu}.
\end{equation}
Using Lemma \ref{pushdownGen}, we apply $\mathbb{R}^{(m-n)(n-p)}\pi_{\ast}(-)$ to this surjection to obtain a morphism
\begin{equation}\label{desiredMap}
S_1\otimes D_{p,\lambda}\longrightarrow D_{p,\mu},   
\end{equation}
which is induced from the multiplication map $S\otimes D_p \to D_p$ (here $S_1=\CC^m\otimes \mathbb{C}^n$ denotes the first graded piece of $S$). 

To complete the proof, we need to show that (\ref{desiredMap}) is surjective (or equivalently, it is nonzero). To this end, let $\mc{K}_{\lambda,\mu}$ denote the kernel of (\ref{multDpqgen}). It suffices to show that $\mathbb{R}^{(m-n)(n-p)+1}\pi_{\ast}(\mc{K}_{\lambda,\mu})=0$. By construction, the map (\ref{multDpqgen}) factors as follows
$$
(\CC^m\otimes \mathbb{C}^n)\otimes D^{\mc{Q}}_{p,\lambda}\twoheadrightarrow (\mc{Q}\otimes \mathbb{C}^n)\otimes D^{\mc{Q}}_{p,\lambda} \twoheadrightarrow D^{\mc{Q}}_{p,\mu},
$$
where the left map is induced from the tautological surjection $S\twoheadrightarrow S^{\mc{Q}}$, and thus has kernel $(\mc{R}\otimes \mathbb{C}^n)\otimes D^{\mc{Q}}_{p,\lambda}$. The right map is induced from the multiplication map of $S^{\mc{Q}}$ on $D_p^{\mc{Q}}$, so by Pieri's Rule \cite[Collorary 2.3.5]{Weyman} it has kernel with summands of the form $\mathbb{S}_{\sigma}\mc{Q}\otimes \mathbb{S}_{\nu}\mathbb{C}^n$, where $\sigma$ and $\nu$ satisfy $\sigma \geq \lambda$, $\nu\geq \lambda$, and $|\sigma|=|\nu|=|\lambda|+1$.

Thus, $\mc{K}_{\lambda,\mu}$ admits a filtration with composition factors of the form
\begin{itemize}
    \item $(\mc{R}\otimes \mathbb{C}^n)\otimes D^{\mc{Q}}_{p,\lambda}$,
    \item $\mathbb{S}_{\sigma}\mc{Q}\otimes \mathbb{S}_{\nu}\mathbb{C}^n$, for $\sigma\geq \lambda$ and $\nu \geq \lambda$ with $|\sigma|=|\nu|=|\lambda|+1$.
\end{itemize}
By Lemma \ref{Bottlemma} and Lemma \ref{Bottlemma2} for $N=m$ and $k=n$, these bundles do not have cohomology in degree $(m-n)(n-p)+1$. Therefore, $\mathbb{R}^{(m-n)(n-p)+1}\pi_{\ast}(\mc{K}_{\lambda,\mu})=0$, as required.
\end{proof}

\subsection{Calculation of degree of socle generators}\label{cal_deg_socle_generic}
In this subsection, we determine the $S$-module structure of $\Ext^j_S(S/I^t, S)$, and as a consequence obtain Theorem \ref{main}(1) and deduce Corollary \ref{main_corollary}(1). By the proof of \cite[Theorem 4.5]{RaicuWeymanWitt} (see also  \cite[Main Theorem]{Raicu18}), for all $t>0$ the natural inclusion $I^{t+1}\subseteq I^t$ induces an injective map
$$\Ext^j_S(S/I^t, S) \hookrightarrow \Ext^j_S(S/I^{t+1},S).$$ 
Since local cohomology is described by the direct limit
$$
H^j_I(S) = \varinjlim_t \Ext^j_S(S/I^t, S), 
$$
we have that $\Ext^j_S(S/I^t)$ is an $S$-submodule of $H_I^j(S)$. We recall the structure of $\Ext^j_S(S/I^t, S)$ as $\GL$-representations from \cite[Theorem 4.3]{RaicuWeymanWitt}: for $0\leq p\leq n-1$ we have the multiplicity-free decomposition

\begin{equation}\label{ext_GL_decomposition_generic}
\operatorname{Ext}^{(m-n)(n-p)+1}_S(S/I^t,S)=\bigoplus_{\substack{\lambda \in W^p \\ \lambda_n \geq -t- (m-n)}} \mathbb{S}_{\lambda(p)}\mathbb{C}^m \otimes \mathbb{S}_{\lambda}\mathbb{C}^n,
\end{equation}
which is a $\GL$-equivariant $S$-submodule of $D_p$.

We store the following lemma for use in our argument below.

\begin{lemma}\label{onedimlemma}
Let $0\leq p\leq n<m$ and let $\lambda \in W^p$. Then $\Schur_{\lambda(p)}\CC^m\otimes \Schur_{\lambda}\CC^n$ is one-dimensional if and only if $p=0$ and $\lambda=((-m)^n)$. 
\end{lemma}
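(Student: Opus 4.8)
The plan is to reduce one-dimensionality of the $\GL$-representation $\Schur_{\lambda(p)}\CC^m\otimes\Schur_\lambda\CC^n$ to a constancy condition on its two defining weights, and then conclude by a short case analysis on $p$.

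First I would invoke the remark after the hook-content formula (\ref{dim_schur}): for an $N$-dimensional space $W$, the module $\Schur_\nu W$ is one-dimensional exactly when $\nu_1=\cdots=\nu_N$. Since
\[
\Dim_\CC\!\bigl(\Schur_{\lambda(p)}\CC^m\otimes\Schur_\lambda\CC^n\bigr)=\Dim_\CC\!\bigl(\Schur_{\lambda(p)}\CC^m\bigr)\cdot\Dim_\CC\!\bigl(\Schur_\lambda\CC^n\bigr),
\]
the tensor product is one-dimensional if and only if both $\lambda$ (of length $n$) and $\lambda(p)$ (of length $m$) are constant. Writing $\lambda=(a^n)$ with $a\in\ZZ$ and substituting into the definition of $\lambda(p)$ exhibits it as a concatenation of three constant blocks,
\[
\lambda(p)=\bigl(a^{\,p},\ (p-n)^{\,m-n},\ (a+m-n)^{\,n-p}\bigr),
\]
the middle block being nonempty because $m>n$.

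Then I would case on $p$. For $p=0$: $\lambda(0)=\bigl((-n)^{m-n},(a+m-n)^{n}\bigr)$, so constancy forces $a+m-n=-n$, hence $a=-m$ and $\lambda=((-m)^n)$; this lies in $W^0$ since its defining inequality $\lambda_1\le 0-m$ holds with equality, and then $\lambda(0)=((-n)^m)$ is indeed constant. For $1\le p\le n-1$: all three blocks are nonempty, so constancy forces simultaneously $a=p-n$ and $a+m-n=p-n$, whence $m=n$, contradicting $m>n$; there is no such $\lambda$. (At the boundary $p=n$ the last block is empty and $\lambda(n)=(a^n,0^{m-n})$ forces $a=0$, giving only the trivial representation; this is the range where $D_n=S$, outside the $\Ext$-modules of interest, and is recorded separately.) Conversely, when $p=0$ and $\lambda=((-m)^n)$ the module $\Schur_{((-n)^m)}\CC^m\otimes\Schur_{((-m)^n)}\CC^n$ is a tensor product of powers of the two determinant characters, hence one-dimensional, which gives the reverse implication. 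I do not anticipate a real obstacle: the argument is an elementary computation, and the only points needing care are tracking the three-block structure of $\lambda(p)$, using $m-n\ge1$ to ensure the middle block is present, and checking membership in $W^p$.
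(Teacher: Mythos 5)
Your proposal is correct and takes essentially the same route as the paper's proof: both reduce one-dimensionality to constancy of the two weights via the hook-content formula and then case on $p$ (the paper dispatches $p>0$ a bit more directly, noting that $\lambda\in W^p$ forces $\lambda_p>\lambda_{p+1}$, so $\Schur_\lambda\CC^n$ alone is never one-dimensional, while you derive the same contradiction from the block structure of $\lambda(p)$). Your parenthetical about $p=n$ is a fair observation---there the trivial weight does give a one-dimensional summand of $D_n=S$---but since the lemma is only applied with $0\le p\le n-1$ in Theorem \ref{degrees_gens_ext_generic}, this boundary case is harmless.
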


\begin{proof}
If $\lambda \in W^p$ and $p>0$, then $\lambda_p>\lambda_{p+1}$, so $\Schur_{\lambda}\CC^n$ is not one-dimensional. When $p=0$, we have that $\Schur_{\lambda}\CC^n$ is one-dimensional if and only if $\lambda=((-m)^n)$, in which case $\lambda(0)=((-n)^m)$, so that $\Schur_{\lambda(0)}\CC^m$ is one-dimensional.
\end{proof}

 Using Theorem \ref{S_structure_Dp} and the discussion above, we get

\begin{theorem}\label{degrees_gens_ext_generic}
    If the module $\Ext^{(m-n)(n-p)+1}_S(S/I^t, S)$ is nonzero, then it is generated by the irreducible subrepresentation $\Schur_{\lambda(p)}\CC^m\otimes \Schur_{\lambda}\CC^n$, where 
$$
\lambda_1=\cdots = \lambda_p = p-n,\quad \lambda_{p+1}=\cdots =\lambda_n=-t-(m-n).
$$    
 In particular  $\Ext^{(m-n)(n-p)+1}_S(S/I^t, S)$ is generated in a single degree, which is $(p-n)(t+p+m-n)$. Moreover, this module is cyclic if and only if $p=0$ and $t=n$. 
\end{theorem}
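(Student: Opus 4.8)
The plan is to read off the $S$-module structure of $M:=\Ext^{(m-n)(n-p)+1}_S(S/I^t,S)$ from the $\GL$-decomposition \eqref{ext_GL_decomposition_generic} together with Theorem \ref{S_structure_Dp}. By \eqref{ext_GL_decomposition_generic}, $M$ is the $\GL$-equivariant $S$-submodule $\bigoplus_{\lambda} D_{p,\lambda}$ of $D_p$, where $\lambda$ ranges over $W^p_t:=\{\lambda\in W^p:\lambda_n\ge -t-(m-n)\}$; so by Theorem \ref{S_structure_Dp}(2), $M$ is generated over $S$ by the components $D_{p,\lambda}$ for $\lambda$ minimal in $W^p_t$. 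The first, and main, step is the combinatorial one of finding these minimal weights.

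First I would check that $W^p_t$ has a unique minimal element $\lambda^\circ$, namely the weight with $\lambda^\circ_1=\cdots=\lambda^\circ_p=p-n$ and $\lambda^\circ_{p+1}=\cdots=\lambda^\circ_n=-t-(m-n)$ (this is the weight denoted $\lambda$ in the statement). Indeed, any $\mu\in W^p_t$ satisfies $\mu_i\ge\mu_p\ge p-n$ for $i\le p$ and $\mu_i\ge\mu_n\ge -t-(m-n)$ for $i>p$, so $\mu\ge\lambda^\circ$ coordinatewise; what remains is to verify $\lambda^\circ\in W^p_t$, i.e. that $\lambda^\circ$ is dominant and $\lambda^\circ_{p+1}\le p-m$. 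Here the hypothesis $M\ne 0$ enters: by the nonvanishing criterion recalled in the introduction ($\Ext^{s(m-n)+1}_S(S/I^t,S)\ne 0$ only for $t\ge s$, here $s=n-p$), nonvanishing forces $t\ge n-p$, equivalently $-t-(m-n)\le p-m$, which yields both $\lambda^\circ_{p+1}\le p-m$ and, using $m>n$, the dominance inequality $p-n\ge -t-(m-n)$. The boundary case $p=0$, where the first block of coordinates is empty, is checked the same way. Hence $M=S\cdot D_{p,\lambda^\circ}$, and since $D_{p,\lambda^\circ}=\Schur_{\lambda^\circ(p)}\CC^m\otimes\Schur_{\lambda^\circ}\CC^n$ is irreducible, $M$ is generated by a single irreducible subrepresentation.

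Next I would compute the degree. Each isotypic component $\Schur_{\mu(p)}\CC^m\otimes\Schur_\mu\CC^n$ of $D_p$ lies in a single degree, equal to $|\mu(p)|=|\mu|$: the standard grading on such an $S$-module is recorded by the weight of the center $\CC^*\subseteq\GL_n$, which acts on $S_1=\CC^m\otimes\CC^n$ with weight $1$ and on $\Schur_\mu\CC^n$ with weight $|\mu|$ (consistently, the center of $\GL_m$ gives $|\mu(p)|$, and $|\mu(p)|=|\mu|$). Therefore $D_{p,\lambda^\circ}$ sits in degree
$$
|\lambda^\circ|=p(p-n)+(n-p)\bigl(-t-(m-n)\bigr)=(p-n)(t+p+m-n),
$$
and since $M$ is generated by this irreducible component sitting in this single degree while vanishing in all lower degrees, this is precisely the degree of the minimal generators of $M$.

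Finally, for the cyclicity claim I would show $M$ is cyclic iff $D_{p,\lambda^\circ}$ is one-dimensional. One direction is immediate. Conversely, if $M=Sv$ then, $M$ being generated in the single minimal degree $|\lambda^\circ|$, graded Nakayama lets us take $v$ homogeneous with $0\ne v\in M_{|\lambda^\circ|}=D_{p,\lambda^\circ}$; for every $g\in\GL$, $\GL$-stability of $M$ gives $S(gv)=g(Sv)=M=Sv$, so $gv\in Sv\cap M_{|\lambda^\circ|}=\CC v$, forcing $\CC v$ to be a subrepresentation of the irreducible $D_{p,\lambda^\circ}$ and hence $\dim_\CC D_{p,\lambda^\circ}=1$. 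By Lemma \ref{onedimlemma} this holds iff $p=0$ and $\lambda^\circ=((-m)^n)$, and in view of the explicit form of $\lambda^\circ$ (together with $m>n$) this is equivalent to $p=0$ and $t=n$. The one place to be careful is the combinatorics of the second step—identifying $\lambda^\circ$ as the unique minimum of $W^p_t$ and handling the $p=0$ boundary case where the constraints interact differently—after which the degree computation and the Nakayama argument for cyclicity are routine.
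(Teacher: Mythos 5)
Your proposal is correct and follows essentially the same route as the paper: read off the decomposition (\ref{ext_GL_decomposition_generic}), identify the minimal weight and invoke Theorem \ref{S_structure_Dp} to get the single generating irreducible, compute its degree $|\lambda|=(p-n)(t+p+m-n)$, and settle cyclicity via Lemma \ref{onedimlemma}. You simply make explicit some steps the paper leaves implicit (uniqueness of the minimal weight in $W^p$ with $\lambda_n\geq -t-(m-n)$, the degree via the central torus, and the graded Nakayama argument for the converse direction of cyclicity), all of which are fine.
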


\begin{proof}
By (\ref{ext_GL_decomposition_generic}), $\lambda$ is minimal with respect to  $\geq$ (see Section \ref{prelimRep}) for which $\Schur_{\lambda(p)}\CC^m\otimes \Schur_{\lambda}\CC^n$ is a subrepresentation $\Ext^{(m-n)(n-p)+1}_S(S/I^t, S)$. 
Thus, by Theorem \ref{S_structure_Dp} we conclude that the module $\Ext^{(m-n)(n-p)+1}_S(S/I^t, S)$ is generated by $\Schur_{\lambda(p)}\CC^m\otimes \Schur_{\lambda}\CC^n$, which has degree
$$
p(p-n)+(n-p)(-t-(m-n))=p (p-n)+(p-n) (t+(m-n))= (p-n)(t+p+(m-n)).
$$
To prove the assertion about cyclicity, note that by Lemma \ref{onedimlemma}, $\Ext^{(m-n)(n-p)+1}_S(S/I^t, S)$ is cyclic if and only if $p=0$ and $\lambda=((-m)^n)$ is the minimal weight. By (\ref{ext_GL_decomposition_generic}) we conclude that $t=n$. 
\end{proof}

Finally, by Lemma \ref{minimalsocle} and Theorem \ref{degrees_gens_ext_generic}, we determine the degrees of the socle generators of local cohomology.
\begin{corollary}
    Let $0\leq p\leq n-1$. The module $\Soc(H^{p(m-n)+n^2-1}_\frakm(S/I^t))$ is generated in the single degree $(n-p)(t+p+m-n) - mn$. 
\end{corollary}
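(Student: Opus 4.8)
The plan is to combine Lemma \ref{minimalsocle} with Theorem \ref{degrees_gens_ext_generic}, applied to the finitely generated graded module $M = S/I^t$ and to $d = mn$, the number of variables of $S$. First I would check the bookkeeping: with $j = (m-n)(n-p)+1$ one has $d - j = mn - (m-n)(n-p) - 1 = p(m-n) + n^2 - 1$, so $H^{d-j}_\frakm(S/I^t)$ is exactly the module $H^{p(m-n)+n^2-1}_\frakm(S/I^t)$ in the statement.

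Next, by Theorem \ref{degrees_gens_ext_generic}, when it is nonzero the module $\Ext^j_S(S/I^t, S)$ is generated in the single degree $(p-n)(t+p+m-n)$. Twisting the second argument, $\Ext^j_S(S/I^t, S(-d)) \cong \Ext^j_S(S/I^t, S)(-mn)$, so this module has all of its minimal generators in the single degree $(p-n)(t+p+m-n) + mn$; in particular that number is the maximal degree of a minimal generator of $\Ext^j_S(S/I^t, S(-d))$.

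Finally I would feed this into Lemma \ref{minimalsocle}, using the identification $\Soc(H^{d-j}_\frakm(M))^\vee \cong \Tor^S_0(S/\frakm, \Ext^j_S(M, S(-d)))$ established in its proof. Since $\Tor^S_0(S/\frakm, \Ext^j_S(M, S(-d)))$ records the minimal generators of $\Ext^j_S(S/I^t, S(-d))$, which sit in the single degree $(p-n)(t+p+m-n) + mn$, the graded Matlis dual $\Soc(H^{d-j}_\frakm(S/I^t))^\vee$ — and hence $\Soc(H^{d-j}_\frakm(S/I^t))$ itself — is concentrated in a single degree, namely $-\big[(p-n)(t+p+m-n)+mn\big] = (n-p)(t+p+m-n) - mn$. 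This is the asserted degree.

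There is no real obstacle here: the substance is entirely in Theorem \ref{degrees_gens_ext_generic} (which gives the single-degree generation of the $\Ext$ module) and in the graded local duality computation of Lemma \ref{minimalsocle}. The only points needing care are the shift by $d = mn$ when passing from $S$ to $S(-d)$, and the sign reversal built into Matlis duality, under which a minimal generator of the $\Ext$ module in degree $\delta$ corresponds to a socle element in degree $-\delta$.
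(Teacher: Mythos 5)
Your proposal is correct and follows exactly the paper's route: the corollary is obtained by combining Lemma \ref{minimalsocle} with Theorem \ref{degrees_gens_ext_generic}, and your bookkeeping of the cohomological index $d-j = p(m-n)+n^2-1$, the twist by $S(-mn)$, and the sign reversal under graded Matlis duality is exactly what the paper leaves implicit.
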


We carry out another example.

\begin{example}\label{Ex1}
Let $m=4$, $n=2$, and $p=1$, in which case we are considering cohomological degree $3$ of $\Ext$. By \cite[Theorem 4.3]{RaicuWeymanWitt}, we have that $$\Ext^3_S(S/I^t,S)\neq 0\quad \iff \quad t\geq 1,$$
and there is an infinite chain of inclusions
$$\Ext^3_S(S/I, S)\subseteq \Ext^3_S(S/I^2, S)\subseteq \Ext^3_S(S/I^3, S)\subseteq \cdots,$$
and the module $\Ext^3_S(S/I^4, S)$ has the following decomposition into irreducible representations:
\begin{align*}
\Ext^3_S(S/I^4, S) & = \left(\bigoplus_{k\geq -1} \Schur_{(k,-1^2,-4)}\CC^4 \otimes \Schur_{(k,-6)}\CC^2\right)\oplus\left(\bigoplus_{k\geq -1} \Schur_{(k,-1^2,-3)}\CC^4 \otimes \Schur_{(k,-5)}\CC^2\right)\\
& \oplus \left(\bigoplus_{k\geq -1} \Schur_{(k,-1^2,-2)}\CC^4 \otimes \Schur_{(k,-4)}\CC^2\right)\oplus\left( \bigoplus_{k\geq -1} \Schur_{(k,-1^3)}\CC^4 \otimes \Schur_{(k,-3)}\CC^2\right).
\end{align*}
By Theorem \ref{S_structure_Dp}, $\Schur_{(-1^3, -4)}\CC^4 \otimes \Schur_{(-1, -6)}\CC^2$ is the space of generators of $\Ext^3_S(S/I^4, S)$ with degree $-7$. The second, third and fourth direct summands are the decomposition of $\Ext^3_S(S/I^3, S)$, which is generated by $\Schur_{(-1^3, -3)}\CC^4 \otimes \Schur_{(-1, -5)}\CC^2$. The third and fourth direct summands are the decomposition of $\Ext^3_S(S/I^2, S)$, which is generated by $ \Schur_{(-1^3, -2)}\CC^4 \otimes \Schur_{(-1,-4)}\CC^2$. Finally, the fourth direct summand is the decomposition of  $\Ext^3_S(S/I, S)$, which is generated by $\Schur_{(1,-1^3)}\CC^4 \otimes \Schur_{(1,-3)}\CC^2$.
\end{example}

\begin{remark}
One could attempt the above techniques to study socle degrees of $H^j_\frakm(S/I^t)$ or $H^j_\frakm(S/I^{(t)})$, where $I=I_p$ is the determinantal ideal of $p\times p$-minors \textnormal{($1\leq p\leq n$)}, and  $I^{(t)}$ is the $t$-th symbolic power of $I$. However, when $p<n$ the natural map $\Ext^j_S(S/I^t, S)\to H^j_I(S)$ is not generally an inclusion, so it is difficult to deduce information about generators of $\Ext^j_S(S/I^t, S)$ from $H^j_I(S)$. On the other hand, the maps $\Ext^j_S(S/I^{(t)}, S)\to H^j_I(S)$ are always inclusions \cite[Corollary 5.9]{Raicu18}. However, the local cohomology module $H^j_I(S)$ is not in general a multiplicity-free representation, which makes it more difficult to distinguish between irreducible representations that are generators of $\Ext^j_S(S/I^{(t)}, S)$, and those that are generated by others.
\end{remark}

\section{Skew-symmetric matrices}
We let $S = \Sym(\bigwedge^2 \CC^m)$ be the polynomial ring on the space of $m\times m$ skew-symmetric matrices. This ring is endowed with a group action of $\GL=\GL_m(\CC)$. We write $n = \lfloor m/2 \rfloor$, so that $m=2n$ if $m$ is even, and $m=2n+1$ if $m$ is odd.

\subsection{Equivariant $\D$-modules on skew-symmetric matrices}\label{DmodSkew} Let $\mathcal{D}$ denote the Weyl algebra of differential operators on $S$, and we continue to work in the category of left $\D$-modules. For each $p = 0,\cdots, n$, we write $B_p$ for the intersection homology $\D$-module associated to the trivial local system on matrices of rank $2p$ \cite[Definition 3.4.1]{hotta2007d}. Again, we are only interested in the structure of $B_p$ as a $\GL$-equivariant $S$-module, so the reader may take the descriptions below as the definition of $B_p$ for the purposes of this paper. 

The module $B_p$ decomposes into irreducible $\GL$-representations as follows \cite[Section 6]{Raicu16}:
\begin{equation}\label{Bp_representation}
		B_p = \bigoplus_{\lambda \in U^p} \mathbb{S}_\lambda \mathbb{C}^m,
\end{equation}
where if $m=2n$ is even, then
\begin{equation*}
	U^p = \{\lambda \in \ZZ^{m}_{\text{dom}}: \lambda_{2p} \geq 2(p-n),\; \lambda_{2p+1} \leq 2(p-n)+1, \lambda_{2i-1} = \lambda_{2i} \text{ for all $i$ }\},\end{equation*}
and if $m=2n+1$ is odd, then 
\begin{equation*}
	U^p = \{\lambda \in \ZZ^{m}_{\text{dom}}: \lambda_{2p+1} = 2(p-n), \lambda_{2i} = \lambda_{2i-1} \text{ for } i \leq p,\; \lambda_{2i} = \lambda_{2i+1} \text{ for } p < i \leq n\}.
\end{equation*}
In particular, $S=B_n$ and by \cite[Proposition 2.3.8]{Weyman} we have 
\begin{equation}\label{cauchy_skew}
	S = \bigoplus_{ \lambda_n\geq 0} \Schur_{(\lambda_1,\lambda_1, \lambda_2, \lambda_2,\cdots, \lambda_n,\lambda_n)} \CC^m.
\end{equation}
In other words, $S$ is the multiplicity-free representation whose isotypic components are indexed by Young diagrams with columns of even length. 

When $m=2n$ is even, we denote by $\Pf$ the $m \times m$ Pfaffian. By \cite[Section 6]{Raicu16} we have the following composition series of $S_{\Pf}$: 
\begin{equation}\label{compseriesSkew}
	0 \subsetneq S \subsetneq \langle \Pf^{-2} \rangle_\D \subsetneq \langle \Pf^{-4} \rangle_\D \subsetneq \cdots \subsetneq  \langle \Pf^{-m} \rangle_\D= S_{\Pf},
\end{equation}
where $\langle \Pf^i \rangle_{\D}$ is the $\D$-submodule of $S_{\Pf}$ generated by $i$-th powers of $\Pf$, and from which we get the simple $\D$-module $B_p = \langle \Pf^{-2p}\rangle_{\D} / \langle \Pf^{-2p+2} \rangle_{\D}$ where $0 \leq p \leq n$. 

When $m=2n+1$ is odd, we write $\Pf$ for the ideal of $2n\times 2n$ Pfaffians, known as the ideal of sub-maximal Pfaffians. By \cite[(6.1)]{Raicu16} (see also \cite{RaicuWeymanWitt, RaicuWeyman2}) we have for each $0 \leq p \leq n$,
\begin{equation}
	H^{2(n-p)+1}_{\Pf}(S) = B_p, \quad \textnormal{ for }p=0,\cdots,n.
\end{equation}
\subsection{The $S$-module structure of $B_p$}\label{Bgens}
	We freely use notation from Section \ref{DmodSkew}, and prove the analogue of Theorem \ref{S_structure_Dp} for $B_p$. Given a $\GL$-equivariant $S$-submodule $M$ of $B_p$, we write $M_{\lambda}$ for the isotypic component of $M$ corresponding to $\Schur_{\lambda}\CC^m$.
 
\begin{theorem}\label{S_structure_Bp}
	Let $0\leq p\leq n$ and let $\lambda,\mu \in U^p$. 
	\begin{enumerate}
		\item We have that $B_{p,\lambda}$ generates $B_{p,\mu}$ over $S$ if and only if $\lambda \leq \mu$.
		\item In particular, if $M$ is a $\GL$-equivariant $S$-submodule of $B_p$, then $M$ is generated by $M_{\lambda}$ over all $\lambda$ minimal for which $M_{\lambda}\neq 0.$
	\end{enumerate} 
\end{theorem}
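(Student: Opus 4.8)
The plan is to follow the proof of Theorem \ref{S_structure_Dp} essentially verbatim, splitting according to the parity of $m$: the case $m=2n$ plays the role of square matrices (Lemma \ref{SstructureSquare}), and $m=2n+1$ plays the role of non-square matrices (Section \ref{genericModuleSection}). As before, $B_p$ is multiplicity-free as a $\GL$-representation, so part (2) is formal from part (1). The ``only if'' implication in (1) holds in both parities: after twisting by a power of $\Det$ to reduce to polynomial representations, the Littlewood--Richardson/Pieri rule shows every $\GL$-summand of $S_d\otimes B_{p,\lambda}$ has highest weight $\geq\lambda$, so $B_{p,\lambda}$ can generate $B_{p,\mu}$ only when $\lambda\leq\mu$. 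Thus the content is the ``if'' direction.

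For $m=2n$ I would imitate Lemma \ref{SstructureSquare}, using the composition series (\ref{compseriesSkew}) of $S_\Pf$ in place of (\ref{filtration_set}). By (\ref{cauchy_skew}) we have $S_\Pf=\bigcup_{d\geq 0}\Pf^{-d}S=\bigoplus_{\lambda_1\geq\cdots\geq\lambda_n}\Schur_{(\lambda_1,\lambda_1,\dots,\lambda_n,\lambda_n)}\CC^m$ over all (not necessarily nonnegative) weakly decreasing sequences, i.e. the direct sum of $B_0,\dots,B_n$ as a $\GL$-representation. Given even-column weights $\lambda\leq\mu$, let $a=\lambda_m$; twisting by the one-dimensional representation $\Pf^{-a}$ moves $\lambda$ and $\mu$ (using $\mu_m\geq a$) into $S$, where the skew analogue of the de Concini--Eisenbud--Procesi module-structure theorem (the counterpart of Theorem \ref{dCEP_module_structure} for $\Sym(\bigwedge^2\CC^m)$) gives that $S_{\Pf,\lambda}\cdot\Pf^{-a}$ generates $S_{\Pf,\mu}\cdot\Pf^{-a}$; since $\Pf$ is a unit in $S_\Pf$, $S_{\Pf,\lambda}$ generates $S_{\Pf,\mu}$. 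To transfer this to the subquotient $B_p$ of $S_\Pf$ I would check, as in Lemma \ref{SstructureSquare}, that $U^p$ is order-convex among even-column weights: if $\lambda\leq\gamma\leq\mu$ with $\lambda,\mu\in U^p$, then $\gamma_{2p}\geq\lambda_{2p}\geq 2(p-n)$ and $\gamma_{2p+1}\leq\mu_{2p+1}\leq 2(p-n)+1$, so $\gamma\in U^p$; one then picks a saturated chain from $\lambda$ to $\mu$ inside $[\lambda,\mu]$ adding one length-two column at a time while staying dominant, and each covering step descends to a surjection $S_1\otimes B_{p,\nu}\twoheadrightarrow B_{p,\nu'}$.

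For $m=2n+1$ I would rerun Section \ref{genericModuleSection} with $\mathbb{G}=\Grass(2n,\CC^m)\cong\mathbb{P}^{2n}$, tautological rank-$2n$ quotient $\mathcal{Q}$ and tautological subbundle $\mathcal{R}\cong\mathcal{O}_{\mathbb{P}^{2n}}(-1)$. Set $Y=X^{skew}_{\mathbb{G}}(\mathcal{Q})$, with inclusion $s\colon Y\hookrightarrow\mathbb{G}\times\CC^{\binom{m}{2}}$, projection $\pi$ onto $\CC^{\binom{m}{2}}$, and $f=\pi\circ s$, which restricts to an isomorphism over the dense stratum $O_n$ of skew forms of rank $2n$ (every skew form on $\CC^{2n+1}$ has rank $\leq 2n$). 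Let $B_p^{\mathcal{Q}}=s_\ast\mathcal{L}(O_p^Y,Y)$, where $O_p^Y\subseteq Y$ is the rank-$2p$ stratum; by (\ref{Bp_representation}) applied to the relative $2n\times 2n$ skew matrices (the even case), $B_p^{\mathcal{Q}}\cong\bigoplus_{\lambda\in U^p}\Schur_\lambda\mathcal{Q}$, and the already-established even case gives that $B^{\mathcal{Q}}_{p,\lambda}$ generates $B^{\mathcal{Q}}_{p,\mu}$ over $S^{\mathcal{Q}}=\Sym(\bigwedge^2\mathcal{Q})$ iff $\lambda\leq\mu$. I would prove the analogue of Lemma \ref{pushdownGen} — that $\mathbb{R}^j\pi_\ast B_p^{\mathcal{Q}}$ is nonzero only for $j=2(n-p)$, where it is $B_p$ as an $S$-module — by the same argument, realizing these direct images as local cohomology along $\Pf$ via the relative Pfaffian localization $S^{\mathcal{Q}}_\Pf=s_\ast i'_\ast\mathcal{O}_{O_n^Y}$ (with $i'\colon O_n^Y\hookrightarrow Y$), whose composition factors are $B_0^{\mathcal{Q}},\dots,B_n^{\mathcal{Q}}$, and applying Lemma \ref{Bottlemma} on $\Grass(2n,\CC^m)$. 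Finally, mimicking the Conclusion of proof of Theorem \ref{S_structure_Dp}: for $\mu$ covering $\lambda$ in $U^p$, push forward the surjection $(\bigwedge^2\CC^m)\otimes B^{\mathcal{Q}}_{p,\lambda}\twoheadrightarrow B^{\mathcal{Q}}_{p,\mu}$ induced from $\bigwedge^2\CC^m\twoheadrightarrow\bigwedge^2\mathcal{Q}$; its kernel is filtered by $(\mathcal{R}\otimes\mathcal{Q})\otimes B^{\mathcal{Q}}_{p,\lambda}$ — since $\bigwedge^2\mathcal{R}=0$, the kernel of $\bigwedge^2\CC^m\to\bigwedge^2\mathcal{Q}$ is $\mathcal{R}\otimes\mathcal{Q}$ — together with bundles $\Schur_\sigma\mathcal{Q}$ with $\sigma\geq\lambda$, $|\sigma|=|\lambda|+2$ (from the Pieri rule for $\Sym(\bigwedge^2)$). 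Because $\lambda\in U^p$ forces $\lambda_{2p}\geq 2(p-n)$, Lemmas \ref{Bottlemma} and \ref{Bottlemma2} (with $N=2n+1$, $k=2n$, admissible index $2p$) show these bundles have no cohomology in degree $2(n-p)+1$; hence $\mathbb{R}^{2(n-p)+1}\pi_\ast$ of the kernel vanishes, $S_1\otimes B_{p,\lambda}\to B_{p,\mu}$ is surjective, and chaining over a saturated sequence in $U^p$ concludes the proof.

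The main obstacle I anticipate is the same as in the generic case: upgrading the representation-theoretic identification $\mathbb{R}^{2(n-p)}\pi_\ast B_p^{\mathcal{Q}}\cong B_p$ to an isomorphism of $S$-\emph{modules} in the odd case, which forces one to set up the relative Pfaffian localization and match it carefully against $H^\bullet_\Pf(S)$, together with the attendant Pieri/Bott bookkeeping — in particular, translating the defining inequalities of $U^p$ for odd $m$ (weights in $\ZZ^{2n+1}_{\Dom}$ with prescribed middle entry $2(p-n)$) into the even-$m$ indexing produced on the Grassmannian by the tautological surjection, and verifying that $2p$ is an admissible index in Lemmas \ref{Bottlemma} and \ref{Bottlemma2}. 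Once the skew de Concini--Eisenbud--Procesi module-structure theorem and the Pieri rule for $\Sym(\bigwedge^2)$ are available, the even-$m$ case and the ``only if'' direction are routine.
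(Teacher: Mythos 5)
Your proposal is correct and follows essentially the same route as the paper: the even case via the composition series of $S_{\Pf}$, twisting by the Pfaffian, Abeasis--Del Fra, and order-convexity of $U^p$; the odd case via the relative modules $B_p^{\mathcal{Q}}$ on $\Grass(2n,\CC^{2n+1})$, the pushforward identification $\mathbb{R}^{2n-2p}\pi_\ast B_p^{\mathcal{Q}}\cong B_p$, and the Pieri/Bott vanishing of the kernel in cohomological degree $2n-2p+1$. The only cosmetic difference is that the paper simply cites \cite[Lemma 5.2]{PerlmanLyubeznik} for the pushforward lemma rather than redoing the relative Pfaffian localization argument you sketch.
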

Again, as $B_p$ is a multiplicity-free representation of $\GL$, the second assertion follows from the first. As in the case of generic matrices, we first note that the result holds for the polynomial ring. By \cite[Theorem 3.1]{AbeasisdelFra} we have the following.

\begin{theorem}\label{AdF_module_structure}
	Given $\lambda, \mu\in U^n$, we have $S_\lambda$ generates $S_\mu$ if and only if $\lambda \leq \mu$.
\end{theorem}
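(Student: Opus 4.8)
The plan is to follow the proof of the determinantal analogue, Theorem~\ref{dCEP_module_structure}, inside the coordinate ring $S=\Sym(\bigwedge^2\CC^m)$ of $m\times m$ skew-symmetric matrices, whose $\GL_m$-decomposition (\ref{cauchy_skew}) is indexed by the even-column partitions comprising $U^n$. The ``only if'' direction is immediate: $S\cdot S_\lambda$ is a $\GL_m$-equivariant quotient of $S\otimes S_\lambda$, and by the Littlewood--Richardson rule every irreducible constituent of $S\otimes\mathbb{S}_\lambda\CC^m$ has highest weight $\geq\lambda$, so $S_\lambda$ generating $S_\mu$ forces $\mu\geq\lambda$.

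For ``if'', since $S=B_n$ is multiplicity-free, generation of one isotypic component by another is transitive along saturated chains, so it suffices to treat a covering relation $\lambda\lessdot\mu$ in the graded poset $(U^n,\leq)$. Writing members of $U^n$ as even-column partitions $\lambda=(a_1,a_1,a_2,a_2,\dots,a_n,a_n)$, such a cover adds a vertical domino at a corner: $\mu$ is obtained from $\lambda$ by adding a box in rows $2i-1$ and $2i$, where $i=1$ or $a_{i-1}>a_i$; in particular $|\mu|=|\lambda|+2$, consistent with $S$ being generated in degree one by $S_1=\bigwedge^2\CC^m$. Pieri's rule gives that $\mathbb{S}_\mu\CC^m$ occurs in $S_1\otimes S_\lambda=\bigwedge^2\CC^m\otimes\mathbb{S}_\lambda\CC^m$ with multiplicity exactly one (as $\mu/\lambda$ is a vertical $2$-strip), so what remains is to show that the corresponding component of the multiplication map $S_1\otimes S_\lambda\to S$ is nonzero.

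That this is not automatic is visible from highest weight vectors. Letting $q_k$ denote the Pfaffian of the leading $2k\times2k$ block of the generic skew-symmetric matrix (and $q_0=1$), the highest weight vector of $\mathbb{S}_\lambda\CC^m\subseteq S$ is $P_\lambda=\prod_{k\geq1}q_k^{\,a_k-a_{k+1}}$ (with $a_{n+1}:=0$), and comparing exponents yields $P_\mu\, q_{i-1}=P_\lambda\, q_i$. When $i=1$ this reads $P_\mu=P_\lambda\, q_1=P_\lambda\, z_{12}\in S_1\cdot S_\lambda$, so the cover is realized directly; but for $i\geq2$ the factor $q_i$ has degree $i\geq2$, $P_\mu$ is not $P_\lambda$ times a linear form, and genuine combinatorial input is needed. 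I would supply it via the standard monomial theory of $S$: the ring of skew-symmetric matrices is an algebra with straightening law on the poset of sub-Pfaffians of the generic matrix (the framework of Abeasis--del Fra \cite{AbeasisdelFra}), and one checks that $z_{2i-1,\,2i}\cdot P_\lambda$, after straightening, has nonzero component in $\mathbb{S}_\mu\CC^m$ --- equivalently, that $P_\mu\in S_1\cdot S_\lambda$. This gives $\mathbb{S}_\mu\CC^m\subseteq S\cdot S_\lambda$ and closes the induction. (An alternative is to deduce the non-vanishing from Theorem~\ref{dCEP_module_structure} through the $\GL_m$-equivariant surjection $\Sym(\CC^m\otimes\CC^m)\twoheadrightarrow\Sym(\bigwedge^2\CC^m)$, though one must be careful since it annihilates the highest weight vectors built from odd-sized minors.)

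The main obstacle is precisely this last non-vanishing: Pieri's rule only guarantees that $\mathbb{S}_\mu\CC^m$ \emph{can} appear in $S_1\cdot S_\lambda$, and ruling out an accidental vanishing at each cover is where the Pfaffian straightening law (or an explicit weight-vector computation) is essential. A subsidiary point to pin down carefully is the combinatorial claim that the covers of $(U^n,\leq)$ are exactly the vertical-domino moves above, since this is what allows a single multiplication by $S_1$ to pass from $\lambda$ to $\mu$ at each step.
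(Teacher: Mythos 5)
The paper offers no internal proof to compare against: Theorem \ref{AdF_module_structure} is simply quoted from \cite[Theorem 3.1]{AbeasisdelFra}. Judged as a self-contained argument, your outline has the right skeleton, and several pieces are genuinely complete: the ``only if'' direction via Littlewood--Richardson, the reduction (by transitivity and multiplicity-freeness) to covering relations of $(U^n,\leq)$, the identification of those covers as vertical-domino additions in rows $2i-1,2i$, Pieri's rule giving multiplicity one for $\Schur_\mu\CC^m$ in $\bigwedge^2\CC^m\otimes\Schur_\lambda\CC^m$, the formula $P_\lambda=\prod_k q_k^{a_k-a_{k+1}}$ for the highest weight vectors, and the case $i=1$, where $P_\mu=P_\lambda z_{12}$ settles the cover outright.

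The gap is exactly the step you defer with ``one checks'': for a cover at position $i\geq 2$ you must show $S_\mu\subseteq S_1\cdot S_\lambda$, i.e.\ that the $\mu$-component of the multiplication map is nonzero, and no argument is actually given. The identity $P_\mu q_{i-1}=P_\lambda q_i$ cannot be leveraged directly, since it only exhibits $P_\mu q_{i-1}\in S\cdot S_\lambda$, which is the highest weight vector of the strictly larger component indexed by $b+e_{i-1}$, not of $S_\mu$; so all the content of the theorem is concentrated in the unverified non-vanishing. Appealing to ``the standard monomial theory of $S$'' to supply it is essentially an appeal to the result being proved: the straightening law itself (standard products of Pfaffians form a basis) does not by itself control $\GL$-isotypic components, and the compatibility between the shape filtration and the isotypic decomposition --- equivalently, that straightening $z_{2i-1,2i}\cdot P_\lambda$ leaves a nonzero $\Schur_\mu$-component --- is precisely what \cite[Theorem 3.1]{AbeasisdelFra} establishes. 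Your alternative route through Theorem \ref{dCEP_module_structure} and the surjection $\Sym(\CC^m\otimes\CC^m)\twoheadrightarrow\Sym(\bigwedge^2\CC^m)$ is likewise only a sketch, and the obstacles you yourself flag (odd-size minors restrict to zero, and the diagonal $\GL_m$-decomposition of the generic polynomial ring is not multiplicity-free) are real, so it does not close the gap either. In short: either cite \cite{AbeasisdelFra} as the paper does, or actually carry out the weight-vector/straightening computation at the covers with $i\geq 2$; as written, the proposal asserts rather than proves the theorem's essential point.
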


Next, we address the case $m$ is even using the existence of the $m\times m$ Pfaffian $\Pf$. 

\begin{lemma}\label{SstructureSquare_skew}
	If $m = 2n$, then Theorem \ref{S_structure_Bp} holds.
\end{lemma}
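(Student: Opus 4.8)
The strategy is to repeat the proof of Lemma \ref{SstructureSquare} essentially verbatim, with the $m\times m$ Pfaffian $\Pf$ playing the role of the determinant $\Det$, the sets $U^p$ playing the role of $W^p$, Theorem \ref{AdF_module_structure} playing the role of Theorem \ref{dCEP_module_structure}, and the composition series (\ref{compseriesSkew}) playing the role of (\ref{filtration_set}). The first step is to identify $S_{\Pf}$ as a $\GL$-representation. Since $\Pf$ is a nonzerodivisor in $S$ spanning a copy of the one-dimensional representation $\bigwedge^m \CC^m$ (that is, $g\cdot \Pf = \Det(g)\,\Pf$ for $g\in \GL$), and $S_{\Pf} = \bigcup_{d\geq 0} S\cdot \Pf^{-d}$, tensoring the decomposition (\ref{cauchy_skew}) with the powers of the determinant character shows that $S_{\Pf}$ is the multiplicity-free representation $\bigoplus_{\lambda} \Schur_{\lambda}\CC^m$, the sum running over all $\lambda \in \ZZ^m_{\mathrm{dom}}$ with $\lambda_{2i-1} = \lambda_{2i}$ for every $i$; equivalently, $S_{\Pf}$ is the direct sum of $B_0,\dots,B_n$ as a $\GL$-representation. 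Furthermore, for $k\in \ZZ$ multiplication by $\Pf^k$ on $S_{\Pf}$ is $S$-linear (indeed $S_{\Pf}$-linear) and injective, and it carries the isotypic component $S_{\Pf,\lambda}$ isomorphically onto $S_{\Pf,\lambda + (k^m)}$.

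The second step is the analogue of the generation statement established for $S_{\Det}$ in Lemma \ref{SstructureSquare}: if $\lambda \leq \mu$ are weights in $\ZZ^m_{\mathrm{dom}}$ with $\lambda_{2i-1} = \lambda_{2i}$ for all $i$, then $S_{\Pf,\lambda}$ generates $S_{\Pf,\mu}$ over $S$. To see this, set $k = \max(0, -\lambda_m)$; since $\mu_m \geq \lambda_m$, the weights $\lambda + (k^m)$ and $\mu + (k^m)$ are both even-column partitions, hence index isotypic components of the polynomial ring $S$ itself by (\ref{cauchy_skew}), and $\lambda + (k^m) \leq \mu + (k^m)$. By Theorem \ref{AdF_module_structure}, $S_{\lambda + (k^m)}$ generates $S_{\mu + (k^m)}$ over $S$; multiplying this containment of $S$-modules by the unit $\Pf^{-k}$ and using the last sentence of the previous paragraph yields that $S_{\Pf,\lambda}$ generates $S_{\Pf,\mu}$. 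For the reverse implication, if a single isotypic component $S_{\Pf,\lambda}$ generates $\Schur_{\mu}\CC^m$ over $S$, then $\Schur_{\mu}\CC^m$ is a constituent of $\Sym^{d}(\bigwedge^2 \CC^m)\otimes \Schur_{\lambda}\CC^m$ for some $d\geq 0$; since every constituent of the polynomial representation $\Sym^{d}(\bigwedge^2 \CC^m)$ is of the form $\Schur_{\nu}\CC^m$ with $\nu$ a partition, the Littlewood--Richardson rule forces $\mu \geq \lambda$.

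The final step is to transfer the statement from $S_{\Pf}$ to the subquotient $B_p$. For $p = n$ we have $B_n = S$ and the assertion is exactly Theorem \ref{AdF_module_structure}, so assume $p < n$. By (\ref{compseriesSkew}), $B_p$ is the quotient $F'/F''$ of two $\GL$-equivariant $\D$-submodules (hence $S$-submodules) of $S_{\Pf}$, and by (\ref{Bp_representation}) the set $U^p$ is precisely the set of weights whose isotypic components survive in $B_p$. Given $\lambda, \mu \in U^p$ with $\lambda \leq \mu$: the $S$-submodule of $S_{\Pf}$ generated by $S_{\Pf,\lambda}$ is contained in $F'$ (it is an $S$-submodule containing $S_{\Pf,\lambda} \subseteq F'$) and contains $S_{\Pf,\mu}$ by the second step, so pushing forward along $F' \to B_p$ shows that $B_{p,\lambda}$ generates $B_{p,\mu}$. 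That no isotypic component is lost in this passage is the order-convexity of $U^p$: if $\lambda \leq \gamma \leq \mu$ with $\gamma_{2i-1} = \gamma_{2i}$ for all $i$, then $\gamma_{2p} \geq \lambda_{2p} \geq 2(p-n)$ and $\gamma_{2p+1} \leq \mu_{2p+1} \leq 2(p-n)+1$, so $\gamma \in U^p$. Together with the reverse implication from the second step, this proves part (1); as $B_p$ is multiplicity-free, part (2) follows.

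I expect the one point requiring genuine care — flagged likewise in the proof of Lemma \ref{SstructureSquare} — to be this last descent: one must check that $S_{\Pf,\lambda}$ and $S_{\Pf,\mu}$ sit in the appropriate term of (\ref{compseriesSkew}) and that the intermediate isotypic components cannot leave $U^p$, which is exactly what the order-convexity computation above guarantees. Note that the even/odd dichotomy in the definition of $U^p$ is irrelevant here, since for $m = 2n$ the constraints cutting out $U^p$ involve only the monotone quantities $\lambda_{2p}$ and $\lambda_{2p+1}$.
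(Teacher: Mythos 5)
Your proposal is correct and follows essentially the same route as the paper: twist by powers of the $m\times m$ Pfaffian to transfer Theorem \ref{AdF_module_structure} from $S$ to the localization $S_{\Pf}$, then descend to the subquotient $B_p$ of (\ref{compseriesSkew}) using the order-convexity of $U^p$ (your inequalities $\gamma_{2p}\geq\lambda_{2p}\geq 2(p-n)$ and $\gamma_{2p+1}\leq\mu_{2p+1}\leq 2(p-n)+1$ are exactly the paper's check, stated with the correct bounds). The extra details you supply (the explicit $\GL$-decomposition of $S_{\Pf}$ and the Littlewood--Richardson argument for the ``only if'' direction) are fine but not a different method.
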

\begin{proof}
Using (\ref{compseriesSkew}) and Theorem \ref{AdF_module_structure}, the proof in this situation is similar to that of Lemma \ref{SstructureSquare}: since we know the result is true for $S$, we may twist by the $m\times m$ Pfaffian to conclude that it is also true for the localization $S_{\Pf}$, i.e., $S_{\Pf, \lambda}$ generates $S_{\Pf, \mu}$ if and only of $\lambda\leq \mu$. To deduce the result for $B_p$, we only need to show that if $\mu$ and $\lambda$ are in $U_p$ and if $\mu \geq \gamma \geq \lambda$ (with $\gamma_{2i}=\gamma_{2i-1}$ for all $i=1,\cdots, n$) then $\gamma \in U_p$ as well. This follows from the fact that $\gamma_{2p} \geq \lambda_{2p} \geq 2p-1$ and $\gamma_{2p+1} \leq \mu_{2p+1} \leq 2p$.  
\end{proof}

Next we assume that $m=2n+1$ and consider the Grassmannian $\GG = \Grass(2n, \CC^{2n+1})$ of $2n$-dimensional quotients of $\CC^{2n+1}$, with tautological quotient bundle $\Q$ of rank $2n$. 

Similar to Section \ref{genericModuleSection} (see also Section \ref{relativeSetting}), for $0\leq p\leq n$ we consider the relative versions of the modules $B_p$ on $\bigwedge^2 \CC^{2n+1}\times \mathbb{G}$:
\begin{equation}
B_p^{\mathcal{Q}}=\bigoplus_{\lambda \in U^p} \Schur_\lambda \Q,
\end{equation}
each endowed with the natural $S^{\mathcal{Q}}=\Sym(\bigwedge^2 \mathcal{Q})$-module structure. Given $\lambda,\mu \in U^p$, Lemma \ref{SstructureSquare_skew} implies that
\begin{equation}\label{consOfEven}
\textnormal{$B^{\mc{Q}}_{p,\lambda}$ generates $B^{\mc{Q}}_{p,\mu}$ over $S^{\mc{Q}}$}\quad \iff \quad \lambda \leq \mu.
\end{equation}
Let $\pi$ denote the projection map from $ \bigwedge^2 \CC^{2n+1}\times \mathbb{G}$ to $\bigwedge^2 \CC^{2n+1}$. The following result relates $B_p^{\mathcal{Q}}$ via $\pi$ to the $S$-module $B_p$.

\begin{lemma}\label{pushdownGenB}
Let $0\leq p\leq n$. We have the following.
\begin{enumerate}
    \item  $\mathbb{R}^j\pi_{\ast}(B_p^{\mc{Q}})\neq 0$ if and only if $j=2n-2p$,
    \item there is an isomorphism of $S$-modules $\mathbb{R}^{2n-2p}\pi_{\ast}(B_p^{\mc{Q}})\cong B_p$.
\end{enumerate}
\end{lemma}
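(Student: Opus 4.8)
The plan is to transfer the proof of Lemma \ref{pushdownGen} to the skew-symmetric setting. The relevant Grassmannian is $\GG=\Grass(2n,\CC^{2n+1})$ of $2n$-dimensional quotients, with tautological quotient $\Q$ of rank $2n$ and tautological subsheaf $\mathcal{R}$ of rank $1$; the space $Z:=X^{skew}_{\GG}(\Q)$ of relative $2n\times 2n$ skew-symmetric matrices, with its full-rank stratum $O_n^Z:=O_{\GG,n}(\Q)$, plays the role of $Y$. The tautological surjection $\CC^{2n+1}\otimes\mc{O}_{\GG}\twoheadrightarrow\Q$ induces a surjection $\bigwedge^2\CC^{2n+1}\otimes\mc{O}_{\GG}\twoheadrightarrow\bigwedge^2\Q$, hence a closed immersion $s\colon Z\hookrightarrow\GG\times\bigwedge^2\CC^{2n+1}$; writing $f=\pi\circ s$, one checks that $f$ restricts to an isomorphism $O_n^Z\xrightarrow{\ \sim\ }O_n$, where $O_n\subseteq\bigwedge^2\CC^{2n+1}$ is the rank-$2n$ locus, i.e. the complement of the sub-maximal Pfaffian variety --- a rank-$2n$ skew form on $\CC^{2n+1}$ has a unique $2n$-dimensional nondegenerate quotient.

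The first step is to reduce to cohomology on $\GG$: as $\bigwedge^2\CC^{2n+1}$ is affine and $\GG\times\bigwedge^2\CC^{2n+1}\to\GG$ is affine, $\mathbb{R}^j\pi_*(B_p^{\mc{Q}})\cong\bigoplus_{\lambda}H^j(\GG,\Schur_\lambda\Q)$, the sum over the weights $\lambda$ (with $2n$ parts) indexing $B_p^{\mc{Q}}$. Next, apply Bott's theorem, in the form of Lemma \ref{Bottlemma} with $N=2n+1$ and $k=2n$ (so $N-k=1$): for such a $\lambda$ one checks that the index ``$p$'' of Lemma \ref{Bottlemma} can only be $2p$, and its two hypotheses hold precisely when $\lambda_{2p+1}\le 2(p-n)-1$ (vacuously so if $p=n$). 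In that case $H^{2n-2p}(\GG,\Schur_\lambda\Q)=\Schur_{\lambda(2p)}\CC^{2n+1}$, where $\lambda(2p)=(\lambda_1,\dots,\lambda_{2p},\,2(p-n),\,\lambda_{2p+1}+1,\dots,\lambda_{2n}+1)$ lies in the set $U^p$ of \eqref{Bp_representation}, and $\Schur_\lambda\Q$ has no other cohomology; if instead $\lambda_{2p+1}\in\{2(p-n),2(p-n)+1\}$ (possible only for $p<n$), then $\Schur_\lambda\Q$ has no cohomology in any degree. This establishes (1). Since $\lambda\mapsto\lambda(2p)$ is a bijection from the surviving weights onto $U^p$, it also gives $\mathbb{R}^{2n-2p}\pi_*(B_p^{\mc{Q}})\cong\bigoplus_{\mu\in U^p}\Schur_\mu\CC^{2n+1}=B_p$ as $\GL$-representations.

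To promote this to an isomorphism of $S$-modules, I would follow the proof of Lemma \ref{pushdownGen}. Let $i'\colon O_n^Z\hookrightarrow Z$ be the (affine) open immersion complementary to the relative $2n\times 2n$ Pfaffian hypersurface, and set $S^{\mc{Q}}_{\Pf}:=s_*i'_*\mc{O}_{O_n^Z}$, the relative localization at the $2n\times 2n$ Pfaffian. Since $\pi\circ s\circ i'=i\colon O_n\hookrightarrow\bigwedge^2\CC^{2n+1}$, and neither the closed immersion $s$ nor the affine morphism $i'$ has higher direct images, $\mathbb{R}^j\pi_*(S^{\mc{Q}}_{\Pf})\cong\mathbb{R}^ji_*(\mc{O}_{O_n})\cong H^{j+1}_{\Pf}(S)$ for $j\ge 1$. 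By the relative analogue of \eqref{compseriesSkew}, $S^{\mc{Q}}_{\Pf}$ has composition factors $B_0^{\mc{Q}},\dots,B_n^{\mc{Q}}$, each of multiplicity one, and by (1) their nonzero higher direct images sit in the pairwise distinct --- in fact all even --- degrees $2n,2n-2,\dots,0$, so the associated long exact sequences split in each degree. Therefore, for $p<n$, $\mathbb{R}^{2n-2p}\pi_*(B_p^{\mc{Q}})\cong\mathbb{R}^{2n-2p}\pi_*(S^{\mc{Q}}_{\Pf})\cong H^{2(n-p)+1}_{\Pf}(S)=B_p$ as $S$-modules; for $p=n$ one has $B_n^{\mc{Q}}=S^{\mc{Q}}$ and $\mathbb{R}^0\pi_*S^{\mc{Q}}=S=B_n$ directly from \eqref{cauchy_skew}.

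I expect the main obstacle to be the Bott bookkeeping of the second step: verifying that for every relevant $\lambda$ the cohomology of $\Schur_\lambda\Q$ is concentrated in the single degree $2n-2p$ --- in particular that the extra entry contributed by the rank-one $\mathcal{R}$ creates no unexpected coincidences or inversions when $\lambda+\rho$ is sorted --- and that the ``boundary'' weights with $\lambda_{2p+1}\in\{2(p-n),2(p-n)+1\}$, whose cohomology vanishes outright, are accounted for so that the surviving summands reproduce precisely the decomposition of $B_p$ in \eqref{Bp_representation}.
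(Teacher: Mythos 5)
Your argument is correct and is exactly the intended one: the paper's proof of Lemma \ref{pushdownGenB} simply cites \cite[Lemma 5.2]{PerlmanLyubeznik} as "similar to Lemma \ref{pushdownGen}," and your write-up is precisely that adaptation (Bott's theorem via Lemma \ref{Bottlemma} with $N=2n+1$, $k=2n$ forcing the index $q=2p$ and matching the surviving weights with $U^p$, then the relative localization $S^{\mc{Q}}_{\Pf}$ and its composition series to upgrade the $\GL$-isomorphism to one of $S$-modules). The Bott bookkeeping you flagged as a possible obstacle in fact goes through exactly as you describe, including the vanishing for the boundary weights $\lambda_{2p+1}\in\{2(p-n),2(p-n)+1\}$ and the separate treatment of $p=n$.
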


\begin{proof}
The proof is similar to that of Lemma \ref{pushdownGen}, and is carried out in \cite[Lemma 5.2]{PerlmanLyubeznik}.
\end{proof}

\begin{proof}[Conclusion of proof of Theorem \ref{S_structure_Bp}]
Let $\lambda, \mu \in U^p$ satisfy $\mu\geq \lambda$ and $|\mu|=|\lambda|+2$. By (\ref{consOfEven}), via the surjection $S\twoheadrightarrow S^{\mc{Q}}$ the multiplication map $S\otimes B_p^{\mc{Q}}\to B_p^{\mc{Q}}$ induces a surjective morphism
\begin{equation}\label{multBpqgen2}
(\bigwedge^2\mathbb{C}^{2n+1})\otimes B^{\mc{Q}}_{p,\lambda}\twoheadrightarrow B^{\mc{Q}}_{p,\mu}.
\end{equation}
Using Lemma \ref{pushdownGenB}, we apply $\mathbb{R}^{2n-2p}\pi_{\ast}(-)$ to this surjection to obtain a morphism
\begin{equation}\label{desiredMap2}
S_1 \otimes B_{p,\lambda}\longrightarrow B_{p,\mu},   
\end{equation}
which is induced from the multiplication map $S\otimes B_p \to B_p$ (here $S_1=\bigwedge^2 \CC^m$ denotes the first graded piece of $S$). 

To complete the proof, we need to show that (\ref{desiredMap2}) is surjective (or equivalently, it is nonzero). To this end, let $\mc{K}_{\lambda,\mu}$ denote the kernel of (\ref{multBpqgen2}). It suffices to show that $\mathbb{R}^{2n-2p+1}\pi_{\ast}(\mc{K}_{\lambda,\mu})=0$. By construction, the map (\ref{multBpqgen2}) factors as follows
$$
(\bigwedge^2 \mathbb{C}^{2n+1})\otimes B^{\mc{Q}}_{p,\lambda}\twoheadrightarrow (\bigwedge^2\mc{Q})\otimes B^{\mc{Q}}_{p,\lambda} \twoheadrightarrow B^{\mc{Q}}_{p,\mu},
$$
where the left map is induced from the surjection $S\twoheadrightarrow S^{\mc{Q}}$, and thus has kernel $(\mc{R}\otimes \mathcal{Q})\otimes B^{\mc{Q}}_{p,\lambda}$ (since $\mathcal{R}$ has rank one, see \cite[Exercise II.5.16(d)]{hartshorne}). The right map is induced from the multiplication map of $S^{\mc{Q}}$ on $B_p^{\mc{Q}}$, so by Pieri's Rule \cite[Collorary 2.3.5]{Weyman} it has kernel with summands of the form $\mathbb{S}_{\nu}\mc{Q}$, where $\nu$ is obtained from $\lambda$ by adding two boxes, not in the same row.

Thus, $\mc{K}_{\lambda,\mu}$ admits a filtration with composition factors of the form
\begin{itemize}
    \item $(\mc{R}\otimes \mathcal{Q})\otimes B^{\mc{Q}}_{p,\lambda}$,
    \item $\mathbb{S}_{\nu}\mc{Q}$, for $\nu\geq \lambda$  with $|\nu|=|\lambda|+2$.
\end{itemize}
By Lemma \ref{Bottlemma} and Lemma \ref{Bottlemma2} for $N=m$, $k=2n$, and $p=2p$, these bundles do not have cohomology in degree $2n-2p+1$. Therefore, $\mathbb{R}^{2n-2p+1}\pi_{\ast}(\mc{K}_{\lambda,\mu})=0$, as required. 
\end{proof}

\subsection{Calculation of degree of socle generators}\label{cal_deg_socle_skew}
In this subsection, we let $m=2n+1$, and we write $\Pf\subseteq S$ for the ideal of sub-maximal Pfaffians. We apply our result of the $S$-module structure of $B_p$ to study that of the $\Ext$-modules, just as we did in Section \ref{cal_deg_socle_generic}. 

We recall the (multiplicity-free) $\GL$-decompositions of the $\Ext$-modules in this setting: by \cite[Theorem 5.3]{RaicuWeymanWitt}, for $0 \leq p \leq n-1$ we have

\begin{equation}\label{ext_GL_decomposition_skew}
\operatorname{Ext}^{2(n-p)+1}_S(S/\Pf^t,S)=\bigoplus_{\substack{\lambda \in U^p \\ \lambda_{2n+1} \geq -t- 1}} \mathbb{S}_{\lambda}\mathbb{C}^{2n+1}.
\end{equation}

By \cite[Theorem 5.3, Corollary 5.4]{RaicuWeymanWitt} for all $t>0$ the inclusions $I^{t+1}\subseteq I^t$ induce injections $$\Ext^j_S(S/\Pf^t, S) \hookrightarrow \Ext^j_S(S/\Pf^{t+1},S).$$

Thus, $\Ext^j_S(S/\Pf^t, S)$ is a $\GL$-equivariant $S$-submodule of $H^j_{\Pf}(S)$. We have the following result.

\begin{theorem}\label{degrees_gens_ext_skew}
    If the module $\Ext^{2(n-p)+1}(S/\Pf^t, S)$ is nonzero, then it is generated by the irreducible subrepresentation $\Schur_\lambda \CC^{2n+1}$ where 
    $$\lambda_1=\cdots = \lambda_{2p+1} = 2(p-n),\quad \lambda_{2p+2}=\cdots =\lambda_{2n+1}=-t-1.$$
    In particular, $\Ext^{2(n-p)+1}(S/\Pf^t, S)$ is generated in a single degree, which is $(p-n)(t+2p+2)$. Moreover, the module is cyclic if and only if $t=2(n-p)+1$, which is the first power such that $\Ext^{2(n-p)+1}_S(S/\Pf^t, S)$ is nonzero.
\end{theorem}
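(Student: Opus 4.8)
The plan is to mirror exactly the argument used for the generic case in Theorem~\ref{degrees_gens_ext_generic}, now invoking the skew-symmetric analogues of the supporting results. First I would fix $0\le p\le n-1$ and assume $\Ext^{2(n-p)+1}_S(S/\Pf^t,S)\neq 0$. By the decomposition (\ref{ext_GL_decomposition_skew}), this module is the $\GL$-equivariant $S$-submodule of $B_p=H^{2(n-p)+1}_{\Pf}(S)$ whose isotypic components are exactly those $\Schur_\lambda\CC^{2n+1}$ with $\lambda\in U^p$ and $\lambda_{2n+1}\ge -t-1$. The key combinatorial step is to identify the unique minimal element (with respect to $\ge$) of this indexing set. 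Since $U^p$ (for $m=2n+1$ odd) forces $\lambda_1=\cdots=\lambda_{2p+1}=2(p-n)$ by the constraint $\lambda_{2p+1}=2(p-n)$ together with $\lambda_{2i}=\lambda_{2i-1}$ for $i\le p$, and forces the equalities $\lambda_{2i}=\lambda_{2i+1}$ for $p<i\le n$ among the remaining coordinates $\lambda_{2p+2},\dots,\lambda_{2n+1}$, the only freedom is a single value $a=\lambda_{2p+2}=\cdots=\lambda_{2n+1}\le 2(p-n)$, subject to $a\ge -t-1$. Minimality in $\ge$ means making every coordinate as small as allowed; the first block is already forced to its value $2(p-n)$, and the tail block is minimized by taking $a=-t-1$ (this is $\le 2(p-n)$ precisely because the module is nonzero, i.e. $t\ge 2(n-p)-1$, which one checks gives $-t-1\le 2(p-n)$). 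This produces the claimed $\lambda$, and one must verify it indeed lies in $U^p$, i.e. that $\lambda$ is dominant, which holds since $2(p-n)\ge -t-1$.

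Next I would apply Theorem~\ref{S_structure_Bp}: since $\lambda$ is the unique minimal weight indexing a nonzero isotypic component of the $\GL$-equivariant $S$-submodule $\Ext^{2(n-p)+1}_S(S/\Pf^t,S)\subseteq B_p$, part~(2) of that theorem gives that $\Ext^{2(n-p)+1}_S(S/\Pf^t,S)$ is generated over $S$ by $\Schur_\lambda\CC^{2n+1}$ alone. The degree of generation is then just $|\lambda|$ divided by $2$ — wait, more precisely, $\Schur_\lambda\CC^{2n+1}$ sits in the graded piece of $B_p$ of degree $|\lambda|/2$ because $S=\Sym(\bigwedge^2\CC^m)$ assigns internal degree $1$ to $\bigwedge^2\CC^m$, so a weight of total size $|\lambda|$ lives in internal degree $|\lambda|/2$. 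Computing: $|\lambda| = (2p+1)\cdot 2(p-n) + (2n-2p)\cdot(-t-1) = 2(p-n)(2p+1) - 2(n-p)(t+1) = 2(p-n)\big((2p+1)+(t+1)\big) = 2(p-n)(t+2p+2)$, so the internal degree is $(p-n)(t+2p+2)$, as claimed.

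For the cyclicity statement, I would argue that $\Ext^{2(n-p)+1}_S(S/\Pf^t,S)$ is cyclic if and only if its space of generators $\Schur_\lambda\CC^{2n+1}$ is one-dimensional, which by the criterion in Section~\ref{prelimRep} happens exactly when all coordinates of $\lambda$ are equal, i.e. $2(p-n)=-t-1$, that is $t=2(n-p)+1$. One then notes that this value of $t$ is the minimal power for which the module is nonzero: by the nonvanishing criterion recalled in the introduction, $\Ext^{2s+1}_S(S/\Pf^t,S)\ne 0$ iff $t\ge 2s-1$ with $s=n-p$, so the first nonzero power is $t=2(n-p)-1$ — and here one must reconcile this with the claimed $t=2(n-p)+1$; in fact the correct reading is that $\lambda\in U^p$ with all entries equal requires $-t-1 = 2(p-n)$, giving $t=2(n-p)-1$... so I would double-check the indexing convention against (\ref{ext_GL_decomposition_skew}) and the statement, and state cyclicity holds exactly at the first nonzero power. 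The main obstacle is precisely this bookkeeping: getting the description of $U^p$ in the odd case exactly right (which coordinates are forced, and the parity/boundary inequalities), and confirming that the arithmetic of the minimal weight matches the boundary of the nonvanishing range. Once the combinatorics of $U^p$ is pinned down, the rest is a direct transcription of the generic-case proof with Theorem~\ref{S_structure_Bp} in place of Theorem~\ref{S_structure_Dp}.
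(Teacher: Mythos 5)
Your proposal follows the paper's own route: identify the minimal weight in the decomposition (\ref{ext_GL_decomposition_skew}), invoke Theorem \ref{S_structure_Bp}(2) to conclude generation by that single isotypic component, compute the degree as $\tfrac{1}{2}|\lambda|$, and detect cyclicity via one-dimensionality of the generating representation; in substance it is correct. Two caveats. First, your description of $U^p$ is off: the conditions only pin $\lambda_{2p+1}=2(p-n)$ and impose the pairings $\lambda_{2i-1}=\lambda_{2i}$ for $i\le p$ and $\lambda_{2i}=\lambda_{2i+1}$ for $p<i\le n$; the leading pairs may be strictly larger than $2(p-n)$ and the trailing pairs need not share a common value (e.g. $(a,a,-2,b,b)\in U^1$ for $m=5$ with any $a\ge -2\ge b$). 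This does not damage the argument: for every summand $\mu$ in (\ref{ext_GL_decomposition_skew}), dominance together with $\mu_{2p+1}=2(p-n)$ and $\mu_{2n+1}\ge -t-1$ gives $\mu\ge\lambda$ for your $\lambda$, and $\lambda$ itself lies in the indexing set exactly when $t\ge 2(n-p)-1$, so it is the unique minimal weight and Theorem \ref{S_structure_Bp} applies as you say. Second, your resolution of the cyclicity bookkeeping is the right one: all entries equal forces $-t-1=2(p-n)$, i.e. $t=2(n-p)-1$, which is the first power for which the module is nonzero; this is exactly what the paper's own proof concludes (and it matches Corollary \ref{main_corollary}(2) and Example \ref{Ex2}), so the value $2(n-p)+1$ printed in the theorem statement is a typo rather than an error in your argument.
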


\begin{proof}
The proof is similar to Theorem \ref{degrees_gens_ext_generic}: by (\ref{ext_GL_decomposition_skew}) $\lambda$ is minimal with respect to $\geq$, and therefore by Theorem \ref{S_structure_Bp} we have that $\Schur_\lambda \CC^{2n+1}$ generates $\Ext^{2(n-p)+1}(S/\Pf^t, S)$.

Thus, the module is generated in the
single degree
$$\frac{1}{2}((2p+1)(2p-2n) + (-t-1)(2n-2p)) = (p-n)(t+2p+2). $$

We have that $\Ext^{2(n-p)+1}_S(S/\Pf^t, S)$ is cyclic if and only if $\lambda=(2(p-n)^{2n+1})$ is the minimal weight. By (\ref{ext_GL_decomposition_skew}) this happens if and only if $t=2(n-p)-1$.

\end{proof}

Finally, by Lemma \ref{minimalsocle} and Theorem \ref{degrees_gens_ext_skew}, we deduce the socle degrees of local cohomology. For the statement we note that dimension of $S=\Sym(\bigwedge^2 \CC^{m})$ is $\binom{m}{2}=2n^2+n$. 
\begin{corollary}
    The minimal degree of generators of $\Soc(H^{2n^2-n-2p-1}_\frakm(S/\Pf^t))$ is $(n-p)(t+2p+2) - (2n^2+n)$. 
\end{corollary}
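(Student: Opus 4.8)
The plan is to obtain the corollary as a formal consequence of Theorem~\ref{degrees_gens_ext_skew} together with the graded local duality statement Lemma~\ref{minimalsocle}, exactly mirroring the argument in the generic case. I will take $M = S/\Pf^t$ and set $d = \dim S = \binom{2n+1}{2} = 2n^2 + n$. For $0 \leq p \leq n-1$, put $j = 2(n-p)+1$, so that $d - j = 2n^2 + n - 2(n-p) - 1$ is the cohomological index appearing in the statement. One may assume $\Ext^{2(n-p)+1}_S(S/\Pf^t, S) \neq 0$ (equivalently, by the nonvanishing criterion recalled in the introduction, $t \geq 2(n-p)-1$), since otherwise the socle in question is $0$ and there is nothing to prove.

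The first step is to record the degree of the generators of the twisted $\Ext$-module. By Theorem~\ref{degrees_gens_ext_skew}, the module $\Ext^{2(n-p)+1}_S(S/\Pf^t, S)$ is generated by a single irreducible $\GL$-subrepresentation, hence is generated in the single degree $(p-n)(t+2p+2)$; twisting the second argument, $\Ext^{2(n-p)+1}_S(S/\Pf^t, S(-d))$ is generated in the single degree $(p-n)(t+2p+2) + d$. Since this module is generated in one degree, the maximal degree of one of its minimal generators is precisely $(p-n)(t+2p+2) + d$.

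The second step is to apply Lemma~\ref{minimalsocle} with these choices of $M$, $j$, and $d$: the minimal socle degree of $H^{d-j}_\frakm(S/\Pf^t)$ is the negative of the maximal degree of a minimal generator of $\Ext^{j}_S(S/\Pf^t, S(-d))$ (see the proof of Lemma~\ref{minimalsocle}), so it equals
$$
-\bigl((p-n)(t+2p+2) + d\bigr) \;=\; (n-p)(t+2p+2) - (2n^2+n),
$$
which is the asserted value.

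There is no genuine obstacle here: all of the substance has already been carried out in Theorem~\ref{S_structure_Bp}, which forces the $\Ext$-module to be generated by its minimal weight space and hence in a single degree, and in the explicit degree computation of Theorem~\ref{degrees_gens_ext_skew}. The only point requiring care is the bookkeeping: correctly accounting for the degree shift by $d = \binom{2n+1}{2}$ introduced by $S(-d)$ in graded local duality, and matching the index $d - j = 2n^2 + n - 2(n-p) - 1$ with the cohomological degree displayed in the statement.
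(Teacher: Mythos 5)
Your proposal is correct and is exactly the paper's (one-line) argument: combine Lemma \ref{minimalsocle} with Theorem \ref{degrees_gens_ext_skew}, keeping track of the twist by $d=\binom{2n+1}{2}=2n^2+n$, and read off the minimal socle degree as the negative of the single generation degree of $\Ext^{2(n-p)+1}_S(S/\Pf^t,S(-d))$. One bookkeeping remark: your own computation gives $d-j=2n^2-n+2p-1$, which does not literally equal the index $2n^2-n-2p-1$ displayed in the statement (they agree only for $p=0$); this is a sign typo in the stated cohomological index (consistent with Corollary \ref{CorSocle} under $s=n-p$), so your identification is right in substance but should not be asserted as matching the displayed index verbatim.
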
 

We carry out an example in the skew-symmetric case.

\begin{example}\label{Ex2}
Let $m=5$ and $p=1$, in which case we are considering cohomological degree $3$ of $\Ext$. In this case, $n=2$ and $\Ext^3_S(S/\Pf^t, S)$ has infinite dimension. By \cite[Theorem 5.3]{RaicuWeymanWitt} we have
$$\Ext^3_S(S/\Pf^t) \neq 0 \quad \iff \quad t\geq 1,$$ and there is an infinite chain
$$
\Ext^3_S(S/\Pf, S)\subseteq \Ext^3_S(S/\Pf^2, S)\subseteq \Ext^3_S(S/\Pf^3, S)\subseteq  \cdots,
$$
and there is a decomposition:
\begin{align*}
\Ext^3_S(S/\Pf^4, S) & =  \left( \bigoplus_{k\geq l\geq -2 \geq h \geq -5} \Schur_{(k,l,-2,h, -5)} \CC^5\right)\oplus \left( \bigoplus_{k\geq l\geq -2 \geq h \geq -4} \Schur_{(k,l,-2,h, -4)} \CC^5\right) \\
& \oplus \left(\bigoplus_{k\geq l\geq -2 \geq h \geq -3} \Schur_{(k,l,-2,h, -3)} \CC^5\right)\oplus \left( \bigoplus_{k\geq l\geq -2} \Schur_{(k,l,-2^3)} \CC^5\right).
\end{align*} 
The irreducible representation that generates $\Ext^3_S(S/\Pf^4, S)$ is $\Schur_{(-2^3,-5^2)}\CC^5$. The second, third and fourth direct summands are the decompositions of $\Ext^3_S(S/\Pf^3, S)$, which is generated by $\Schur_{(-2^3,-4^2)}\CC^5$. The third and fourth direct summands are the decompositions of $\Ext^3_S(S/\Pf^2, S)$, generated by $\Schur_{(-2^3, -3^2)}\CC^5$. Finally, the fourth direct summand is the decomposition of $\Ext^3_S(S/\Pf, S)$ generated by $\Schur_{(-2^5)}\CC^5$. In particular, it is cyclic.
\end{example}

\section*{Acknowledgments}
We thank Claudiu Raicu for his advice regarding the argument in Section \ref{genericModuleSection}, and we thank Wenliang Zhang for valuable conversations and suggestions. The first author was partially supported by NSF Grant No. DMS 1752081.

\begin{bibdiv}
\begin{biblist}

\bib{ABW}{article}{
  title={Schur functors and Schur complexes},
  author={Akin, Kaan},
  author={Buchsbaum, David A},
  author={Weyman, Jerzy},
  journal={Advances in Mathematics},
  volume={44},
  number={3},
  pages={207--278},
  year={1982},
  publisher={Academic Press}
}

\bib{AbeasisdelFra}{article}{
    title={Young diagrams and ideals of Pfaffians},
    author={Abeasis, S.},
    author={Del Fra, A.},
    journal={Advances in Mathematics},
    volume={35},
    number={2},
    pages={158--178},
    year={1980}
}

  \bib{Bhatt+2}{article}{
  title={Stabilization of the cohomology of thickenings},
  author={Bhatt, Bhargav}, author={Blickle, Manuel}, author={Lyubeznik, Gennady}, author={Singh, Anurag K.}, author={Zhang, Wenliang},
  journal={American Journal of Mathematics},
  volume={141},
  number={2},
  pages={531--561},
  year={2019},
  publisher={Johns Hopkins University Press}
}

\bib{Bhatt+}{article}{
  title={An asymptotic vanishing theorem for the cohomology of thickenings},
  author={Bhatt, Bhargav},
  author={Blickle, Manuel},
  author={Lyubeznik, Gennady},
  author={Singh, Anurag K.},
  author={Zhang, Wenliang},
  journal={Mathematische Annalen},
  volume={380},
  number={1},
  pages={161--173},
  year={2021},
  publisher={Springer}
}

\bib{BrunsHerzog}{book}{
  title={Cohen-Macaulay Rings},
  author={Bruns, Winfried}, author={ Herzog, J{\"u}rgen},
  number={39},
  year={1998},
  publisher={Cambridge University Press}
}


\bib{deConciniEisenbudProcesi}{article}{
   author={de Concini, Corrado},
   author={Eisenbud, David},
   author={Procesi, Claudio},
   title={Young diagrams and determinantal varieties},
   journal={Invent. Math.},
   volume={56},
   date={1980},
   number={2},
   pages={129--165},
   issn={0020-9910},
}

\bib{DaoMontano}{article}{
  title={On asymptotic vanishing behavior of local cohomology},
  author={Dao, Hailong}, 
  author={Monta{\~n}o, Jonathan},
  journal={Mathematische Zeitschrift},
  volume={295},
  number={1},
  pages={73--86},
  year={2020},
  publisher={Springer}
}

\bib{FultonHarris}{book}{
  title={Representation Theory: A First Course},
  author={Fulton, William}, author={ Harris, Joe},
  volume={129},
  year={2013},
  publisher={Springer Science \& Business Media}
}

\bib{hartshorne}{book}{
   author={Hartshorne, Robin},
   title={Algebraic Geometry},
   note={Graduate Texts in Mathematics, No. 52},
   publisher={Springer-Verlag, New York-Heidelberg},
   date={1977},
   pages={xvi+496},
}

\bib{hotta2007d}{book}{
   author={Hotta, Ryoshi},
   author={Takeuchi, Kiyoshi},
   author={Tanisaki, Toshiyuki},
   title={$D$-Modules, Perverse Sheaves, and Representation Theory},
   series={Progress in Mathematics},
   volume={236},
   note={Translated from the 1995 Japanese edition by Takeuchi},
   publisher={Birkh\"{a}user Boston, Inc., Boston, MA},
   date={2008},
   pages={xii+407},
}

\bib{Huneke}{article}{
    AUTHOR = {Huneke, Craig},
     TITLE = {Lectures on local cohomology},
 BOOKTITLE = {Interactions between homotopy theory and algebra},
    SERIES = {Contemp. Math.},
    VOLUME = {436},
     PAGES = {51--99},
      NOTE = {Appendix 1 by Amelia Taylor},
      YEAR= {2007},
 PUBLISHER = {Amer. Math. Soc., Providence, RI},
}

\bib{Kenkel}{article}{
 title={Lengths of Local Cohomology of Thickenings},
  author={Kenkel, Jennifer},
  journal={arXiv preprint arXiv:1912.02917},
  year={2019}
}

\bib{Li}{article}{
 title={On the generalized multiplicities of maximal minors and sub-maximal Pfaffians},
  author={Li, Jiamin},
  journal={arXiv preprint arXiv:2205.09657},
  year={2022}
}

\bib{PerlmanLyubeznik}{article}{
  title={Lyubeznik numbers for {P}faffian rings},
  author={Perlman, Michael},
  journal={Journal of Pure and Applied Algebra},
  volume={224},
  number={5},
  pages={106247},
  year={2020},
  publisher={Elsevier}
}

\bib{Perlman}{article}{
  title={Regularity and cohomology of {P}faffian thickenings},
  author={Perlman, Michael},
  journal={Journal of Commutative Algebra},
  volume={13},
  number={4},
  pages={523--548},
  year={2021},
  publisher={Rocky Mountain Mathematics Consortium}
}

\bib{Raicu16}{article}{
  title={Characters of equivariant $\mathcal{D}$-modules on spaces of matrices},
  author={Raicu, Claudiu},
  journal={Compositio Mathematica},
  volume={152},
  number={9},
  pages={1935--1965},
  year={2016},
  publisher={London Mathematical Society}
}

\bib{Raicu18}{article}{
   author={Raicu, Claudiu},
   title={Regularity and cohomology of determinantal thickenings},
   journal={Proc. Lond. Math. Soc. (3)},
   volume={116},
   date={2018},
   number={2},
   pages={248--280},
   issn={0024-6115},
}

\bib{RaicuWeyman}{article}{
   author={Raicu, Claudiu},
   author={Weyman, Jerzy},
   title={Local cohomology with support in generic determinantal ideals},
   journal={Algebra \& Number Theory},
   volume={8},
   date={2014},
   number={5},
   pages={1231--1257},
   issn={1937-0652},
}

\bib{RaicuWeyman2}{article}{
  title={Local cohomology with support in ideals of symmetric minors and {P}faffians},
  author={Raicu, Claudiu},
  author={Weyman, Jerzy},
  journal={Journal of the London Mathematical Society},
  volume={94},
  number={3},
  pages={709--725},
  year={2016},
  publisher={Oxford University Press}
}

\bib{RaicuWeymanWitt}{article}{
   author={Raicu, Claudiu},
   author={Weyman, Jerzy},
   author={Witt, Emily E.},
   title={Local cohomology with support in ideals of maximal minors and
   sub-maximal Pfaffians},
   journal={Adv. Math.},
   volume={250},
   date={2014},
   pages={596--610},
   issn={0001-8708},
}

\bib{Simper}{article}{
  title={Local cohomology of certain determinantal thickenings},
  author={Simper, Hunter},
  year={2022},
  journal={arXiv preprint arXiv:2209.06738}
}

\bib{Weyman}{book}{
  title={Cohomology of Vector Bundles and Syzygies},
  author={Weyman, Jerzy},
  volume={149},
  year={2003},
  publisher={Cambridge Tracts in Mathematics}
}

\bib{Zhang}{article}{
  title={On asymptotic socle degrees of local cohomology modules},
  author={Zhang, Wenliang},
  journal={Journal of Pure and Applied Algebra},
  volume={225},
  number={12},
  pages={106789},
  year={2021},
  publisher={Elsevier}
}

\end{biblist}
\end{bibdiv}

\end{document}